\numberwithin{equation}{section}
\theoremstyle{plain}
\newtheorem{lemma}[equation]{Lemma}
\newtheorem{theorem}[equation]{Theorem}
\newtheorem{corollary}[equation]{Corollary}
\newtheorem{proposition}[equation]{Proposition}
\theoremstyle{definition}
\newtheorem{definition}[equation]{Definition}
\newtheorem{remark}[equation]{Remark}
\newtheorem{example}[equation]{Example}
\newcommand{\M}{\mathcal{M}}
\newcommand{\cL}{\mathcal{L}}
\newcommand{\C}{\mathcal{C}}
\newcommand{\B}{\mathcal{B}}
\def\Z{{\mathbb{Z}}}
\def\K{{\mathbb{K}}}
\def\F{{\mathbb{F}}}
\title{Most $q$-matroids are not representable}
\author{Sebastian Degen}
\address{Universit\"at Bielefeld, Fakult\"at f\"ur Mathematik, Bielefeld, Germany}
\email{sdegen@math.uni-bielefeld.de}
\author{Lukas~K\"uhne}
\email{lkuehne@math.uni-bielefeld.de}
\begin{document}
\maketitle

\begin{abstract}
A $q$-matroid is the analogue of a matroid which arises by replacing the finite ground set of a matroid with a finite-dimensional vector space over a finite field.
These $q$-matroids are motivated by coding theory as the representable $q$-matroids are the ones that stem from rank-metric codes.
In this note, we establish a $q$-analogue of Nelson's theorem in matroid theory by proving that asymptotically almost all $q$-matroids are not representable.
This answers a question about representable $q$-matroids by Jurrius and Pellikaan strongly in the negative.\\
\textbf{Keywords}: $q$-matroids, representability, rank-metric codes.
\end{abstract}

%%%%%%%%%%%%%%%%%%%%%%%%%%%%%%%%%%%%%%%%%%%%%%%%%%%%%%%%%%%%%%%%%%%%%%%%%%%%%%%%%%%%%%%%%%%%%%%%
%%%%%%%%%%%%%%%%%%%%%%%%%%% 1. Introduction %%%%%%%%%%%%%%%%%%%%%%%%%%%%%%%%%%%%%%%%%%%%%%%%%%%%
%%%%%%%%%%%%%%%%%%%%%%%%%%%%%%%%%%%%%%%%%%%%%%%%%%%%%%%%%%%%%%%%%%%%%%%%%%%%%%%%%%%%%%%%%%%%%%%%
\hfill{{\em In memory of Kai-Uwe Schmidt.}}
\section{Introduction}

The concept of $q$-matroids as $q$-analogues of matroids  by replacing finite sets with finite dimensional vector spaces over finite fields was first described in Crapo's Ph.D. thesis~\cite{crapo}.
They were recently re-introduced by Jurrius and Pellikaan in \cite{jp2018} with a focus on their connections to coding theory.

We start by recalling the definition of $q$-matroids. In the following let $E$ be a finite dimensional vector space over a finite field $\F_q$, for some prime power $q$ and denote by $\cL(E)$ its lattice of subspaces. 

\begin{definition}
    A \textbf{$q$-matroid} $\M$ is a pair $(E,\rho)$ of a vector space $E$ as above and a function $\rho:\cL(E)\rightarrow\Z_{\geq 0}$, called the \textbf{$q$-rank function} satisfying the following axioms for all subspaces $X,Y\leq E$:
    \begin{enumerate}
        \item  $0\leq \rho(X)\leq \dim(X)$,
        \item  If $X\leq Y$, then $\rho(X)\leq \rho(Y)$ and
        \item  $\rho(X\cap Y)+\rho(X+Y)\leq \rho(X)+\rho(Y)$.
    \end{enumerate}
\end{definition}
These axioms are a direct translation of the usual axioms of a matroid in terms of its rank function to the setting of a vector space and its subspaces.

One of the main motivations to study $q$-matroids stems from coding theory, as the representable $q$-matroids arise from rank-metric codes. Recently, several new phenomena about rank-metric codes were discovered by studying their underlying $q$-(poly)matroids, see for instance \cite{gjlr2019,gj2022,gj2023,ab2023_1}. 
On the other hand, studying $q$-matroids as analogues of matroids is also an active field of research.~Significant effort has been invested to define $q$-analogue versions of matroidal concepts, such as $q$-cryptomorphic axioms systems, see \cite{ab2023_2,cj2023,bcj2022}, or a $q$-analogue of the direct sum of matroids, see \cite{cj2022}.
It is however fair to say that in these cases the $q$-analogue is much more involved than the immediate translation of the matroid rank function above.

Jurrius and Pellikaan asked whether all $q$-matroids are representable~\cite{jp2018}.
This question turned out to be too optimistic, as the first examples of non-representable appeared recently: 
Gluesing-Luerssen and Jany introduced a method of translating well-known non-representable matroid such as the V\'amos matroid to the $q$-analogue setting~\cite{gj2022} whereas Ceria and Jurrius found the smallest non-representable $q$-matroid which is of rank $2$ on $\F_2^4$~\cite{cj2022}.

The main result of this paper answers the above question negatively in a strong sense by proving the following asymptotic result on representable $q$-matroids.

\begin{theorem}\label{q_mat_asymptotics}
    Let $n$ be an integer, $\mathcal{R}_{q}(n)$ the number of representable $q$-matroids and $\mathcal{N}_{q}(n)$ the number of all $q$-matroids on $\F_q^n$, respectively. Then the ratio $\frac{\mathcal{R}_{q}(n)}{\mathcal{N}_{q}(n)}$ asymptotically vanishes, i.e.,
    \[
    \lim_{n\rightarrow\infty}\frac{\mathcal{R}_{q}(n)}{\mathcal{N}_{q}(n)}=0.
    \]
\end{theorem}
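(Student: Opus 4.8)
The plan is to mimic Nelson's counting argument for classical matroids: bound the number of representable $q$-matroids from above by a quantity that is only singly exponential in a polynomial in $n$, and bound $\mathcal{N}_{q}(n)$ from below by a doubly exponential quantity, so that the ratio is forced to zero. Concretely, I would aim to show $\log_2 \mathcal{R}_{q}(n) = O(\mathrm{poly}(n))$ while $\log_2 \mathcal{N}_{q}(n) = q^{\Omega(n^2)}$. Since the subspace lattice $\cL(\F_q^n)$ has only $\sum_{k} \binom{n}{k}_q = q^{n^2/4 + o(n^2)}$ elements, the latter bound is essentially best possible, and the two estimates are wildly separated.

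For the lower bound on $\mathcal{N}_{q}(n)$ I would construct a $q$-analogue of sparse paving matroids. Fix $k=\lfloor n/2\rfloor$ and, for a family $\mathcal{F}$ of $k$-dimensional subspaces, define $\rho_{\mathcal{F}}(V)=\dim V$ when $\dim V<k$, $\rho_{\mathcal{F}}(V)=k$ when $\dim V>k$, and, for $\dim V=k$, set $\rho_{\mathcal{F}}(V)=k-1$ if $V\in\mathcal{F}$ and $\rho_{\mathcal{F}}(V)=k$ otherwise. This is the uniform $q$-matroid $U_{k,n}$ with its rank lowered on the members of $\mathcal{F}$. The only nontrivial instance of the submodular inequality occurs for two distinct members $X,Y\in\mathcal{F}$, where it reduces to $\dim(X\cap Y)\le k-2$. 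Hence $\rho_{\mathcal{F}}$ is a valid $q$-rank function exactly when $\mathcal{F}$ is a constant-dimension code of minimum subspace distance at least $4$. Since any subset of such a code is again such a code, each subset $\mathcal{F}$ of a fixed distance-$4$ code $\mathcal{C}$ yields a distinct $q$-matroid, giving $\mathcal{N}_{q}(n)\ge 2^{|\mathcal{C}|}$. Choosing $\mathcal{C}$ to be a large constant-dimension code — for instance a lifted MRD code, which has size $q^{(n-k)(k-1)}=q^{\Omega(n^2)}$ — yields $\mathcal{N}_{q}(n)\ge 2^{q^{\Omega(n^2)}}$.

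For the upper bound on $\mathcal{R}_{q}(n)$ I would use that every representable $q$-matroid on $\F_q^n$ arises from an $\F_{q^m}$-linear rank-metric code, i.e. from an $\F_{q^m}$-subspace $C\le\F_{q^m}^n$, via the rank function attached to $C$. For a fixed degree $m$ the number of such codes is at most $\sum_{k=0}^{n}\binom{n}{k}_{q^m}\le (n+1)\,q^{m n^2/4}$. The crucial step is therefore to bound the extension degree: I would prove that any representable $q$-matroid on $\F_q^n$ is already representable over $\F_{q^m}$ for some $m$ bounded by a polynomial $p(n)$. Summing over $m\le p(n)$ and all ranks then gives $\mathcal{R}_{q}(n)\le p(n)(n+1)\,q^{p(n) n^2/4}$, so that $\log_2 \mathcal{R}_{q}(n)=O(\mathrm{poly}(n))$.

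Combining the two estimates, $\mathcal{R}_{q}(n)/\mathcal{N}_{q}(n)\le 2^{O(\mathrm{poly}(n))}/2^{q^{\Omega(n^2)}}\to 0$, which proves the theorem. I expect the main obstacle to be the degree bound in the upper bound step: controlling the field of definition of a representable $q$-matroid is the $q$-analogue of the well-known difficulty in Nelson's proof of bounding the fields over which a representable matroid lives, and it is the only place where the coding-theoretic representation theory genuinely enters. A secondary technical point is verifying that large constant-dimension codes of distance $4$ and dimension $\approx n/2$ really do attain size $q^{\Omega(n^2)}$, which I would extract from standard MRD constructions.
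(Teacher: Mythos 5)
Your lower bound is essentially the paper's own argument: distance-$4$ constant dimension codes satisfy the pairwise intersection condition $\dim(V\cap W)\le k-2$ of the paving construction of Gluesing-Luerssen and Jany, every subset of such a code again does, and a lifted MRD code of size $q^{(n-k)(k-1)}$ with $k=\lfloor n/2\rfloor$ then gives $\mathcal{N}_q(n)>2^{q^{\Omega(n^2)}}$. That half is sound.

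The upper bound, however, has a genuine gap, and it sits exactly where you flagged it: the claim that every representable $q$-matroid on $\F_q^n$ is representable over $\F_{q^m}$ with $m\le p(n)$ for some polynomial $p$ is not proved, is not in the literature as far as I know, and is not easily obtainable. The natural route --- the set of representations is a constructible set over $\F_q$ cut out by the polynomials $\det(G\,Y_U^T)$ of degree $k$ in $kn$ variables, so find a point in a small extension --- runs into the problem that the only off-the-shelf bound on the degree of a point (B\'ezout on a zero-dimensional piece of the closure) is of order $k^{kn}=2^{\Theta(n^2\log n)}$. Note that your argument does not actually need $m$ polynomial, only $m=o\bigl(q^{n^2/4}/n^2\bigr)$ so that $q^{mn^2/4}$ stays below $2^{q^{(n-k)(k-1)}}$; but $2^{\Theta(n^2\log n)}$ eventually exceeds $q^{n^2/4}=2^{n^2\log_2(q)/4}$, so even this weaker requirement fails for the naive bound. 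Getting a better bound (say via Lang--Weil on a positive-dimensional component) requires controlling absolute irreducibility and the field of definition of components of a variety defined by exponentially many equations, which is a substantial project in itself. The paper sidesteps the field-of-definition question entirely: it counts, via the R\'onyai--Babai--Ganapathy zero-pattern theorem, the number of possible zero/nonzero configurations of the sequence $\bigl(\det(G\,Y_{U_i}^T)\bigr)_i$ as the entries of $G$ range over all of $\overline{\F_q}^{\,kn}$. Since every representation over every $\F_{q^m}$ lands in $\overline{\F_q}^{\,kn}$, and distinct representable $q$-matroids give distinct zero patterns, the bound $\binom{md}{s}$ with $m=\binom{n}{k}_q$, $s=kn$, $d=k$ yields $\mathcal{R}_q(k,n)\le q^{O(n^4)}$ with no reference to the size of the representing field. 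If you want to salvage your route, you should replace the degree-bound step by this zero-pattern count; otherwise the upper bound remains unproven.
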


This result implies that the portion of representable $q$-matroids tends to $0$ as $n$ goes to infinity. 
This is a $q$-analogue version of a celebrated theorem of Nelson which says that asymptotically almost all matroids are not representable~\cite{n2018}.
To prove this theorem, we provide a lower bound on the number $\mathcal{N}_{q}(n)$ via coding theoretic estimations on constant dimension codes and an upper bound on the number $\mathcal{R}_{q}(n)$ via the algebraic concept of zero patterns.

Our paper is structured as follows. In \Cref{preliminaries} we briefly recap the basic notions of $q$-matroids and rank metric codes, needed for the discussion in the later sections. In \Cref{lower_bound} we explain the concept of constant dimension codes and describe a lower bound on their maximal cardinality. Afterwards, we establish their connection to $q$-matroids which yields the lower bound on the number $\mathcal{N}_{q}(n)$. In \Cref{upper_bound} we define the notion of zero patterns of a polynomial sequence and explain how these are connected to representable $q$-matroids. Subsequently, we derive an upper bound on the number of representable $q$-matroids of rank $k$ in $\F_q^n$. We conclude this section with a discussion about the asymptotic behavior of representable $q$-matroids and the proof of our main result, \Cref{q_mat_asymptotics}.

\section*{Acknowledgments}

We thank Gianira Nicoletta Alfarano and Charlene Weiß for fruitful discussions.
We are grateful to Kai-Uwe Schmidt for introducing us to the topic of $q$-matroids.
Lastly, we would like to thank the anonymous referees for their helpful comments on an earlier version of the article.

Both authors are supported by the Deutsche Forschungsgemeinschaft (DFG, German Research Foundation) -- SFB-TRR 358/1 2023 -- 491392403 and SPP 2458 -- 539866293.

\section{Preliminaries}
\label{preliminaries}

\subsection{Notation} Throughout the paper we always think of $E$ as $\F_q^n$ for some prime power $q$ and some integer $n\geq 1$. Moreover, we denote by ${E\choose k}_q$ the set of all $k$-dimensional subspaces of $E$. Finally we abbreviate the row space of a matrix $M\in\F_q^{k\times n}$ as $\text{rowspan}(M)\in\cL(\F_q^n)$.

\subsection{$q$-Binomial coefficients}

 The $k$-dimensional subspaces in $\F_q^n$ for $0\leq k\leq n$, are counted via the \textbf{$q$-binomial coefficient}
\[
    {n\choose k}_q:=\frac{(q^n-1)\cdots(q^{n-k+1}-1)}{(q-1)\cdots(q^k-1)}.
\]

The following bounds on the $q$-binomial coefficient will be crucial for the estimations later on in this paper, see \cite[Lemma 2.1, Lemma 2.2]{i2015}.

\begin{lemma}\label{Estimations_q_binom}
    For $0\leq k\leq n$ and $q\geq 2$, the following holds
        \[
        q^{(n-k)k}\leq{n\choose k}_q\leq \frac{111}{32}q^{(n-k)k}.
        \]
\end{lemma}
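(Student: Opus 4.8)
The plan is to rewrite the $q$-binomial coefficient as a product of $k$ factors, each close to $q^{n-k}$, and then control the deviation from $q^{(n-k)k}$. Pairing the $i$-th factor of the numerator with the $i$-th factor of the denominator yields
\[
{n\choose k}_q = \prod_{i=1}^{k}\frac{q^{n-k+i}-1}{q^{i}-1} = q^{(n-k)k}\prod_{i=1}^{k}\frac{1-q^{-(n-k+i)}}{1-q^{-i}},
\]
where the second equality comes from extracting $q^{n-k+i}$ from each numerator and $q^{i}$ from each denominator. The cases $k=0$ and $k=n$ are immediate, as both sides equal $1$. The lower bound is then essentially free: since $n\geq k$ we have $n-k+i\geq i$, hence $q^{-(n-k+i)}\leq q^{-i}$ and therefore $1-q^{-(n-k+i)}\geq 1-q^{-i}>0$. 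Each factor of the remaining product is thus at least $1$, which gives ${n\choose k}_q\geq q^{(n-k)k}$.

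For the upper bound I would first discard the numerators, using $1-q^{-(n-k+i)}\leq 1$, to obtain
\[
\prod_{i=1}^{k}\frac{1-q^{-(n-k+i)}}{1-q^{-i}} \leq \prod_{i=1}^{k}\frac{1}{1-q^{-i}} \leq \prod_{i=1}^{\infty}\frac{1}{1-q^{-i}}=:f(q).
\]
Since each factor $\tfrac{1}{1-q^{-i}}$ is decreasing in $q$, the function $f$ is decreasing, so it suffices to bound $f(2)$. In this way the entire upper bound reduces to the single numerical claim $f(2)\leq \tfrac{111}{32}$, independent of $n$ and $k$.

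To establish $f(2)\leq\tfrac{111}{32}$ I would separate a few initial factors from a tail: a direct computation gives $\prod_{i=1}^{4}(1-2^{-i})^{-1}=\tfrac{1024}{315}$, and it remains to show that the tail $T=\prod_{i=5}^{\infty}(1-2^{-i})^{-1}$ satisfies $T\leq\tfrac{111}{32}\cdot\tfrac{315}{1024}$. Using $-\log(1-x)\leq\tfrac{x}{1-x}$ one bounds $\log T\leq\sum_{i=5}^{\infty}\tfrac{1}{2^{i}-1}$, and the elementary estimate $2^{i}-1\geq(1-2^{-5})2^{i}$ for $i\geq 5$ turns this into a geometric series summing to $\tfrac{2}{31}$; exponentiating gives $T\leq e^{2/31}$, which lies below the required threshold. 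The main obstacle is precisely this last step: the true value $f(2)\approx 3.4627$ is uncomfortably close to $\tfrac{111}{32}=3.46875$, so the tail must be estimated carefully enough that the cheap bounds do not overshoot. This is why one must peel off several initial terms exactly before applying the crude logarithmic estimate to the remainder.
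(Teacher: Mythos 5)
Your proof is correct. Note that the paper does not prove this lemma at all: it imports both bounds from the cited reference \cite[Lemmas 2.1, 2.2]{i2015}, so there is no in-paper argument to compare against. Your argument is the standard one and is complete: the factorization ${n\choose k}_q=q^{(n-k)k}\prod_{i=1}^{k}\frac{1-q^{-(n-k+i)}}{1-q^{-i}}$ is right, the lower bound follows since each factor is at least $1$, and the upper bound correctly reduces to the single numerical claim $\prod_{i=1}^{\infty}(1-2^{-i})^{-1}\le \frac{111}{32}$ by monotonicity in $q$. Your handling of that claim is also sound and appropriately cautious: $\prod_{i=1}^{4}(1-2^{-i})^{-1}=\frac{1024}{315}$, the tail satisfies $\log T\le\sum_{i\ge 5}\frac{1}{2^i-1}\le\frac{32}{31}\sum_{i\ge5}2^{-i}=\frac{2}{31}$, and $e^{2/31}\le 1+\frac{2}{31}+\frac{(2/31)^2}{2(1-2/31)}=1+\frac{2}{31}+\frac{2}{899}<\frac{111}{32}\cdot\frac{315}{1024}=\frac{34965}{32768}$, so the last comparison (the only delicate point, since $f(2)\approx 3.4627$ versus $\frac{111}{32}=3.46875$) can indeed be closed with elementary estimates. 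The only thing I would add is that final explicit inequality for $e^{2/31}$, since "lies below the required threshold" deserves one line of justification given how little room there is.
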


\subsection{$q$-Matroids}
\label{q_mats}

In this subsection, we shortly recall the basic notions about $q$-matroids, which we frequently use throughout the article. For more details see for instance \cite{jp2018,bcj2022}.

In the introduction, we recalled the definition of a $q$-matroid.~Given such a $q$-matroid $\M=(E,\rho)$ we are interested in its associated concepts.
We call $\rho(\M):=\rho(E)$ the \textbf{rank} of $\M$. A subspace $X\in\cL(E)$ is called \textbf{independent} if $\rho(X)=\dim X$, otherwise it is called \textbf{dependent}. If an independent space~$B$ satisfies $\rho(B)=\rho(E)$, it is called a \textbf{basis} of $\M$. Finally, we call a subspace $C$ a \textbf{circuit} if it is dependent and all its proper subspaces are independent.

As with usual matroids, there are cryptomorphic characterizations of $q$-matroids by its collection of independent spaces, dependent spaces, bases and circuits, see \cite{jp2018,bcj2022, cj2023}.
Somewhat surprisingly unless in the case of the characterization by rank function, these alternative descriptions are generally more involved than the direct translations of the usual matroidal axioms.

In \Cref{lower_bound} we need a special construction of so-called \textbf{paving} $q$-matroids, which are $q$-matroids~$\M$ such that every circuit $C$ in $\M$ satisfies $\dim(C)\geq \rho(\M)$.
One way to obtain paving $q$-matroids is via the following construction, described in~\cite{gj2022}.

\begin{proposition}\cite[Proposition 4.5]{gj2022}
    Let $n$ be an integer and fix $1\leq k\leq n-1$. Further let $\mathcal{S}$ be a collection of $k$-dimensional subspaces of $\F_q^n$ such that for every two distinct $V,W\in\mathcal{S}$, $\dim(V\cap W)\leq k-2$. Define the map
    
    $$\rho:\cL(E)\rightarrow\mathbb{Z}_{\geq 0},\quad V\mapsto
    \begin{cases}
        \;\hfil k-1\;&\emph{if }V\in\mathcal{S},\\
        \;\hfil \min\{\dim V,k\}\;&\emph{otherwise}. 
    \end{cases}$$
    Then $(E,\rho)$ is a paving $q$-matroid of rank $k$, whose circuits of rank $k-1$ are the subspaces in $\mathcal{S}$.
    \label{paving_construction}
\end{proposition}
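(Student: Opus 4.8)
The plan is to verify the three rank axioms directly for the proposed function $\rho$, exploiting the combinatorial hypothesis that any two distinct members of $\mathcal{S}$ intersect in dimension at most $k-2$. First I would check the easy structural facts: since every $V \in \mathcal{S}$ has $\dim V = k$, the function never exceeds $\dim V$, and it is bounded below by $0$, so the boundedness axiom $0 \le \rho(X) \le \dim(X)$ is immediate (note $k-1 = \dim V - 1 \ge 0$ because $k \ge 1$). I would also record the key consequence of the intersection hypothesis: if $W \le E$ properly contains some $V \in \mathcal{S}$, or more generally if $W$ contains two distinct members of $\mathcal{S}$, then $\dim W \ge k+1$, so $\rho(W) = k$; and crucially, no subspace of dimension $\le k-1$ can contain a member of $\mathcal{S}$, while a $k$-dimensional subspace contains at most one member of $\mathcal{S}$ (namely itself, if it lies in $\mathcal{S}$).

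Next I would handle monotonicity. Suppose $X \le Y$. If $\rho$ agrees with $\min\{\dim,k\}$ on both, monotonicity is clear. The only subtle case is when the value is depressed at one of the two spaces. If $X \in \mathcal{S}$ then $\dim X = k$, so $\dim Y \ge k$ and $\rho(Y) \ge k-1 = \rho(X)$ regardless of whether $Y \in \mathcal{S}$; in fact if $Y \ne X$ then $Y$ properly contains $X \in \mathcal{S}$, forcing $\dim Y \ge k+1$ and $\rho(Y) = k > \rho(X)$. If instead $Y \in \mathcal{S}$ (so $\dim Y = k$) and $X \le Y$ with $X \ne Y$, then $\dim X \le k-1$, so $X \notin \mathcal{S}$ and $\rho(X) = \dim X \le k-1 = \rho(Y)$. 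These cases exhaust the possibilities, so axiom~(2) holds.

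The main work is submodularity, $\rho(X \cap Y) + \rho(X + Y) \le \rho(X) + \rho(Y)$, and this is the step I expect to be the real obstacle. I would split into cases according to how many of $X, Y$ lie in $\mathcal{S}$. When neither $X$ nor $Y$ is in $\mathcal{S}$, the left side is bounded by the truncation $\min\{\dim,k\}$, which is itself a (submodular) matroid rank function, so the inequality holds automatically. When exactly one of them, say $X \in \mathcal{S}$, then $\rho(X) = k-1$ while $\rho(Y) = \min\{\dim Y, k\}$; here I must show the deficit of $1$ on the left is absorbed, using that $X \cap Y$ is a proper subspace of $X$ unless $Y \supseteq X$, and analyzing $\rho(X+Y)$ via the dimension of $X + Y$. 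The genuinely delicate case is when both $X, Y \in \mathcal{S}$ are distinct: then $\rho(X) + \rho(Y) = 2(k-1)$, the intersection hypothesis gives $\dim(X \cap Y) \le k-2$ so $\rho(X \cap Y) \le k-2$, and $\dim(X+Y) \ge k+2$ forces $\rho(X+Y) = k$; one checks $k + (k-2) = 2k - 2 = 2(k-1)$, so the inequality is tight. I would assemble these cases, being careful in each to read off $\rho(X+Y)$ and $\rho(X\cap Y)$ from the dimension bounds the hypothesis supplies. Finally, to identify the circuits, I would observe that every $V \in \mathcal{S}$ is dependent of dimension $k$ with $\rho(V) = k-1$, and that every proper subspace $U < V$ has $\dim U \le k-1 < k$, hence $U \notin \mathcal{S}$ and $\rho(U) = \dim U$, so $U$ is independent; thus each $V \in \mathcal{S}$ is a circuit, and since any dependent space of dimension $\le k-1$ cannot exist (as $\rho = \dim$ off $\mathcal{S}$ and $\mathcal{S}$ lives in dimension $k$), these are exactly the circuits of rank $k-1$, confirming the paving property.
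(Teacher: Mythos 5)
Your argument is correct, but note that the paper offers no proof of this proposition to compare against: it is imported verbatim from Gluesing-Luerssen and Jany \cite[Proposition 4.5]{gj2022}, so the only available comparison is with that external source, and your direct verification of the three rank axioms is the natural (and, as far as I can tell, essentially the standard) way to establish it. Your case analysis is sound: axiom (1) and monotonicity are handled correctly, the case of two distinct members of $\mathcal{S}$ is where the hypothesis $\dim(V\cap W)\le k-2$ is genuinely used and you compute it exactly right ($\rho(X\cap Y)\le k-2$ and $\rho(X+Y)=k$ sum to $2(k-1)$), and the case where neither space lies in $\mathcal{S}$ correctly reduces to submodularity of the uniform rank $\min\{\dim,k\}$ because $\rho$ is pointwise bounded above by it. The one place you only sketch rather than execute is the mixed case $X\in\mathcal{S}$, $Y\notin\mathcal{S}$; the sketch does work, but to make it airtight you should record the three subcases explicitly: if $X\le Y$ or $Y\le X$ one gets equality directly, and otherwise $X\cap Y$ is a proper subspace of both $X$ and $Y$, so $\rho(X\cap Y)=\dim(X\cap Y)\le\min\{\dim Y,k\}-1$ while $\rho(X+Y)\le k$, which absorbs the deficit of $1$ coming from $\rho(X)=k-1$. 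The identification of the circuits and the paving property at the end is complete and correct, including the observation that a dependent space not in $\mathcal{S}$ necessarily has dimension greater than $k$ and rank $k$, so the rank-$(k-1)$ circuits are exactly the members of $\mathcal{S}$.
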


Another crucial concept stemming from matroid theory is the duality of $q$-matroids.
This is again inspired by the usual matroid duality.
\begin{proposition}\cite[Theorem 42]{jp2018}
    Let $\M=(E,\rho)$ be a $q$-matroid and $\B$ its collection of bases. Let $\perp$ denote the orthogonal complement w.r.t.\ a non-degenerated bilinear form on $E$.
    Define the function $\rho^*$ by setting
    \begin{align*}
    	&\rho^*:\cL(E)\rightarrow\Z_{\geq 0}, \quad    	\rho^*(V)=\dim(V)+\rho(V^\perp)-\rho(E).
    \end{align*}
    
    Then $\M^*=(E,\rho^*)$ is a $q$-matroid, called the \textbf{dual $q$-matroid} of $\M$. Moreover, the collection of bases $\B^*$ of $\M^*$ are the orthogonal complements of the elements in $\B$.  
\end{proposition}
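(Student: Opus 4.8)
The plan is to verify the three rank axioms for $\rho^*$ and then to identify the bases, treating the underlying bilinear form as fixed so that $\perp$ is an involution on $\cL(E)$ with $\dim(V^\perp)=n-\dim(V)$ and obeying the De Morgan laws $(X\cap Y)^\perp=X^\perp+Y^\perp$ and $(X+Y)^\perp=X^\perp\cap Y^\perp$. Before touching $\rho^*$ I would record two facts about $\rho$ itself. First, $\rho(0)=0$ and $\rho(L)\leq 1$ for every line $L$, both immediate from axiom (1). Second, the \emph{unit increase property}: for $X\leq E$ and a line $L\not\leq X$, submodularity applied to $X$ and $L$ gives $\rho(0)+\rho(X+L)\leq\rho(X)+\rho(L)$, hence $\rho(X+L)\leq\rho(X)+1$. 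Iterating along a chain of subspaces from $W$ to $E$ with one-dimensional jumps yields the key inequality
\[
\rho(E)-\rho(W)\leq\dim(E)-\dim(W)\qquad\text{for all }W\leq E,
\]
which is exactly what controls the non-negativity and monotonicity of $\rho^*$.

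With these in hand the first two axioms fall out. For axiom (1), writing $W=V^\perp$ and $\dim(V)=n-\dim(W)$, the upper bound $\rho^*(V)\leq\dim(V)$ is just $\rho(W)\leq\rho(E)$ by monotonicity of $\rho$, while $\rho^*(V)\geq 0$ rearranges to $\rho(E)-\rho(W)\leq\dim(E)-\dim(W)$, the inequality above (integrality of $\rho^*$ is clear). Monotonicity of $\rho^*$ is the same inequality applied to the pair $V'^\perp\leq V^\perp$: if $V\leq V'$ then the dimension gain $\dim(V')-\dim(V)$ equals $\dim(V^\perp)-\dim(V'^\perp)$ and dominates $\rho(V^\perp)-\rho(V'^\perp)$, so $\rho^*(V)\leq\rho^*(V')$.

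Submodularity of $\rho^*$ is the cleanest step. Substituting the definition, the dimension terms cancel by the modular law $\dim(X\cap Y)+\dim(X+Y)=\dim(X)+\dim(Y)$ and the four copies of $\rho(E)$ cancel as well, so the claim reduces to
\[
\rho\big((X\cap Y)^\perp\big)+\rho\big((X+Y)^\perp\big)\leq\rho(X^\perp)+\rho(Y^\perp).
\]
Applying the De Morgan laws turns the left-hand side into $\rho(X^\perp+Y^\perp)+\rho(X^\perp\cap Y^\perp)$, so this is precisely submodularity of $\rho$ for the pair $X^\perp,Y^\perp$.

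Finally, for the bases I would first compute $\rho^*(E)=n+\rho(0)-\rho(E)=n-\rho(E)$, so $\M^*$ has rank $n-r$ where $r=\rho(E)$. If $B$ is a basis of $\M$, then $\dim(B)=\rho(B)=r$ and $\rho^*(B^\perp)=(n-r)+\rho(B)-r=n-r=\dim(B^\perp)$, so $B^\perp$ is an independent space of full rank, i.e.\ a basis of $\M^*$. Conversely, given a basis $B^*$ of $\M^*$, setting $B=(B^*)^\perp$ and running the same computation backwards forces $\rho(B)=r=\dim(B)$, so $B$ is a basis of $\M$ with $B^\perp=B^*$ by involutivity of $\perp$. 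The only genuinely non-formal ingredient in the whole argument is the unit increase property and the resulting dimension inequality; everything else is bookkeeping with the modular and De Morgan laws.
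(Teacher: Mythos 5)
Your proof is correct and complete: the unit-increase property gives exactly the inequality $\rho(X)-\rho(W)\leq\dim(X)-\dim(W)$ needed for axioms (1) and (2), submodularity reduces to that of $\rho$ via the De Morgan laws for $\perp$, and the basis correspondence follows from the rank computation $\rho^*(E)=n-\rho(E)$. The paper itself does not prove this proposition but imports it from \cite[Theorem~42]{jp2018}; your argument is essentially the standard one given there, so there is nothing further to compare.
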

By definition, the dual rank is given by $\rho^*(\M^*)=n-\rho(\M)$ and we naturally have $(\M^*)^*=\M$. We want to emphasize here that there exists a bijection between $q$-matroids in $\F_q^n$ of rank $k$ and those of rank $(n-k)$ given by the map which sends a $q$-matroid to its dual, see \cite[Section 8]{jp2018}. Therefore we may restrict ourselves to considering $q$-matroids of rank $k$ with  $0\leq k\leq\lfloor\frac{n}{2}\rfloor$ where $n=\dim E$.

We conclude this subsection with two examples.

\begin{example}\label{exp:uniform}
	Set $E=\F_q^n$ for some prime power $q$ and integer $n\geq 1$.
    For a number $0\leq k\leq n$, we define the \textbf{uniform $q$-matroid} $\mathcal{U}_{k,n}(E):=(E,\rho)$ of rank $k$ and dimension $n$, where $\rho$ is given by
    \[
    	\rho(V)=\min\{k,\dim V\},\;\text{for all }V\in\cL(E).
    \]
    So the collection of bases consists of all ${n\choose k}_q$ many $k$-dimensional subspaces of $E$.
    Notice that  $\mathcal{U}_{k,n}(E)$ is a paving $q$-matroid, since the collection of circuits consist of all ${n\choose k+1}_q$ many $(k+1)$-spaces of $E$.
    Moreover, $\mathcal{U}_{k,n}(E)$ arises from the construction in Proposition \ref{paving_construction}, by setting $\mathcal{S}=\emptyset$.
    
    Finally the dual $\mathcal{U}_{k,n}^*(E)$ is the uniform $q$-matroid $\mathcal{U}_{n-k,n}(E)$.
\end{example}

\begin{example}
    Let $E=\F_2^4$, $k=2$ and $\mathcal{S}$ be the collection of the $2$-dimensional subspaces $V,W$ generated by the rows of these two matrices $A$ and $B$, respectively:
    \[
    A=\begin{pmatrix}
        1&0&0&0\\
        0&1&0&0\\
    \end{pmatrix},\quad
    B=\begin{pmatrix}
        0&0&1&0\\
        0&0&0&1\\
    \end{pmatrix}.
    \]

    We clearly have $\dim(V\cap W)=0$, and so the collection $\mathcal{S}$ satisfies the assumptions of \Cref{paving_construction}.
    Thus this yields a paving $q$-matroid of rank $2$ in dimension $4$, having all ${4\choose2}_2=35$ many $2$-spaces as bases, except the two in $\mathcal{S}$.
    \label{q_mat_example}
\end{example}

\subsection{Rank-metric codes and representable $q$-matroids}
\label{rank_metric_code}

This subsection serves as a brief introduction to algebraic coding theory, more specifically we recall the basics about rank-metric codes and their connection to $q$-matroids, see~\cite{ab2023_2,ab2023_1} for further details.

We start with rank-metric codes. For this purpose we endow the vector space $\F_{q^m}^n$ with the so called \textbf{rank distance metric}, defined as $d_{\text{rk}}(v,w):=\text{rk}(v-w)$ for every $v,w\in\F_{q^m}^n$, where for $v=(v_1,\ldots,v_n)\in\F_{q^m}^n$ we set $\text{rk}(v):=\dim_{\F_q}\langle v_1,\ldots,v_n\rangle_{\F_q}$.

\begin{definition}
    We call an $\F_{q^m}$-linear subspace $\C\leq\F_{q^m}^n$ a \textbf{rank-metric code} in $\F_{q^m}^n$. Its \textbf{minimal rank distance} is
    $$d_{\text{rk}}(\C):=\min\{\text{rk}(v)\;|\;v\in\C,v\not=0\}.$$
    
    Moreover if $\C$ has dimension $k$, we call a matrix $G\in\F_{q^m}^{k\times n}$ whose rows generate $\C$, a \textbf{generator matrix} of $\C$.
    Finally we denote by $\C^\perp$ the \textbf{dual code} of $\C$, which is the orthogonal complement of $\C$ with respect to the standard dot product given by $v\cdot w=\sum_{i=1}^n v_i w_i$ for all $v,w\in\F_{q^m}^n$.
\end{definition}

In the literature, sometimes the above definition of a rank-metric code refers to a \textbf{vector rank-metric code}, to distinguish it from the so-called \textit{linear matrix rank-metric code}. 
A \textbf{matrix rank-metric code} is a subset $\C\subseteq\F_q^{n\times m}$, where $\F_q^{n\times m}$ is endowed with the metric $d(A,B):=\text{rk}(A-B)$ for all $A,B\in\F_q^{n\times m}$. It has \textbf{minimal rank-distance} given by $$d(\C):=\min\{\text{rk}(M-N)\;|\;M,N\in\C,M\not= N\}.$$
If $\C$ is not only a subset of $\F_q^{n\times m}$ but also a $\F_q$-linear subspace, it is called \textbf{linear}.

These vector and linear matrix rank-metric codes are related to each other, see \cite{gj2022,ab2023_1} for more details.
In our paper, the term rank-metric code will always refer to a vector rank-metric code, unless otherwise specified.
 
In analogy with usual matroids, we now define representable $q$-matroids as those which arise from rank-metric codes.

\begin{proposition}\cite[Theorem 24]{jp2018}\label{rank_func_rep_q_mats}
    Let $\F_{q^m}$ be a field extension of $\F_q$ and let $G$ be a $k\times n$-matrix with entries in $\F_{q^m}$ where $1\le k \le n$.
    Assume that $G$ has rank $k$.
    Define a map $\rho:\cL(\F_q^n)\rightarrow\Z_{\geq 0}$ via
    $$\rho(V)=\text{rk}_{\F_{q^m}}(GY^T),$$
    where $Y$ is a matrix such that the subspace $V$ is the row space of $Y$. Then $\M_G:=(\F_q^n,\rho)$ is a $q$-matroid of rank $k$, called the \textbf{$q$-matroid represented by $G$}.
\end{proposition}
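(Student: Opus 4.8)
The plan is to reinterpret $\rho$ through a single $\F_{q^m}$-linear map and then reduce all three rank axioms to standard linear algebra over $\F_{q^m}$, using that extension of scalars from $\F_q$ to $\F_{q^m}$ is well-behaved on the subspace lattice. Concretely, I would let $\psi:\F_{q^m}^n\rightarrow\F_{q^m}^k$ be the $\F_{q^m}$-linear map $w\mapsto Gw^T$, and for each $\F_q$-subspace $V\leq\F_q^n$ write $\widehat{V}:=\langle V\rangle_{\F_{q^m}}\leq\F_{q^m}^n$ for its $\F_{q^m}$-span. If $Y$ is a matrix over $\F_q$ with row space $V$, then the columns of $GY^T$ are exactly the images under $\psi$ of the rows of $Y$, so the $\F_{q^m}$-column space of $GY^T$ equals $\psi(\widehat{V})$; hence $\rho(V)=\dim_{\F_{q^m}}\psi(\widehat{V})$. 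This identity immediately settles well-definedness: any two matrices with row space $V$ have the same $\F_{q^m}$-span of rows, so $\psi(\widehat{V})$, and thus $\rho(V)$, does not depend on the chosen $Y$.

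Next I would dispatch the two easy axioms. Since $GY^T$ is a $k\times\dim(V)$ matrix, its rank is a non-negative integer bounded by $\dim(V)$, giving axiom~(1); and $\rho(E)=\operatorname{rk}_{\F_{q^m}}(G)=k$ by the full-rank hypothesis on $G$, so $\M_G$ has rank $k$. For monotonicity (axiom~(2)), $V\leq W$ gives $\widehat V\leq\widehat W$, whence $\psi(\widehat V)\subseteq\psi(\widehat W)$ and $\rho(V)\leq\rho(W)$.

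The core of the argument, and the step I expect to be the main obstacle, is submodularity (axiom~(3)). Here I would use two compatibility properties of the scalar extension $V\mapsto\widehat V$: that it preserves dimension, $\dim_{\F_{q^m}}\widehat V=\dim_{\F_q}V$ (an $\F_q$-basis of $V$ stays $\F_{q^m}$-linearly independent), and that it is a lattice homomorphism, i.e.\ $\widehat{X+Y}=\widehat X+\widehat Y$ and, crucially, $\widehat{X\cap Y}=\widehat X\cap\widehat Y$. The sum identity is clear; the intersection identity is the delicate point and follows from flatness of $-\otimes_{\F_q}\F_{q^m}$ (equivalently: apply the exact extension functor to the map $\F_q^n\rightarrow(\F_q^n/X)\oplus(\F_q^n/Y)$ whose kernel is $X\cap Y$). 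Granting these, I compute
\[
\rho(X\cap Y)+\rho(X+Y)=\dim_{\F_{q^m}}\psi(\widehat X\cap\widehat Y)+\dim_{\F_{q^m}}\psi(\widehat X+\widehat Y).
\]
Since $\psi(\widehat X\cap\widehat Y)\subseteq\psi(\widehat X)\cap\psi(\widehat Y)$ and $\psi(\widehat X+\widehat Y)=\psi(\widehat X)+\psi(\widehat Y)$, the right-hand side is at most $\dim_{\F_{q^m}}\bigl(\psi(\widehat X)\cap\psi(\widehat Y)\bigr)+\dim_{\F_{q^m}}\bigl(\psi(\widehat X)+\psi(\widehat Y)\bigr)$, which equals $\dim_{\F_{q^m}}\psi(\widehat X)+\dim_{\F_{q^m}}\psi(\widehat Y)=\rho(X)+\rho(Y)$ by the dimension formula for subspaces of $\F_{q^m}^k$. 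This proves submodularity and completes the verification that $\M_G$ is a $q$-matroid of rank $k$.
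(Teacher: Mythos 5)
Your argument is correct and complete: identifying $\rho(V)$ with $\dim_{\F_{q^m}}\psi(\widehat V)$ settles well-definedness, the first two axioms are immediate, and the submodularity step via $\psi(\widehat X\cap\widehat Y)\subseteq\psi(\widehat X)\cap\psi(\widehat Y)$, $\psi(\widehat X+\widehat Y)=\psi(\widehat X)+\psi(\widehat Y)$ and the dimension formula is sound (in fact only the trivial inclusion $\widehat{X\cap Y}\subseteq\widehat X\cap\widehat Y$ is needed there, so the flatness discussion, while correct, is dispensable). The paper itself gives no proof of this statement --- it is imported verbatim from \cite[Theorem 24]{jp2018} --- and your scalar-extension verification is the standard route taken there, so there is nothing substantive to contrast.
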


Note that we can view $G$ as the generator matrix of a rank-metric code $\C\leq\F_{q^m}^n$, therefore $\M_G$ is in the literature also sometimes named the \textbf{$q$-matroid associated to $\C$} and denoted by $\M_\C$, see for instance \cite{ab2023_2}.

\begin{definition}
    A $q$-matroid $\M$ is \textbf{representable} if there exist $m\geq 1$ and a rank-metric code $\C\leq\F_{q^m}^n$, such that the associated $q$-matroid $\M_\C$ equals $\M$. 
\end{definition}

These $q$-matroid representations behave well under duality:

\begin{proposition}\cite[Theorem 48]{jp2018}\label{Duality_rep_q_mats}
    Let $\M_\C$ be a representable $q$-matroid, with associated rank-metric code $\C\leq\F_{q^m}^n$. Then $\M_\C^*=\M_{\C^\perp}$. 
\end{proposition}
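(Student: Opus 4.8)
The plan is to prove the equality of $q$-matroids $\M_\C^*=\M_{\C^\perp}$ by showing that their rank functions agree on every subspace $V\le\F_q^n$. I fix the standard dot product as the non-degenerate bilinear form on $E=\F_q^n$ used for the $q$-matroid duality, so that orthogonal complements of subspaces of $\F_q^n$ are taken with respect to the same form that defines $\C^\perp$. Let $G\in\F_{q^m}^{k\times n}$ be a generator matrix of $\C$ and $H\in\F_{q^m}^{(n-k)\times n}$ one of $\C^\perp$; these satisfy $GH^T=0$. For a subspace $V\le\F_q^n$ of dimension $d$ I choose $Y\in\F_q^{d\times n}$ with $\text{rowspan}(Y)=V$, and write $\overline{V}\le\F_{q^m}^n$ for the $\F_{q^m}$-span of $V$, so that $\overline{V^\perp}=(\overline V)^\perp$ is the orthogonal complement of $\overline V$ in $\F_{q^m}^n$.

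The key step is an intrinsic reformulation of the represented rank function, namely
\[
\rho(V)=\text{rk}_{\F_{q^m}}(GY^T)=\dim_{\F_{q^m}}\C-\dim_{\F_{q^m}}(\C\cap\overline{V^\perp}).
\]
To obtain this I consider the $\F_{q^m}$-linear map $\phi_V\colon\C\to\F_{q^m}^d$ sending a codeword $c$ to the tuple of dot products with the rows of $Y$. The rows of $GY^T$ are exactly the images $\phi_V(g_i)$ of the rows $g_i$ of $G$, which span $\C$, so the row space of $GY^T$ equals $\text{im}(\phi_V)$ and hence $\text{rk}(GY^T)=\dim\text{im}(\phi_V)$. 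The kernel of $\phi_V$ is the set of codewords orthogonal to every row of $Y$, that is $\C\cap\overline{V^\perp}$; here I use that taking orthogonal complements commutes with extension of scalars for the standard form, because $V^\perp$ is the $\F_q$-solution space of $Yx^T=0$ and $\text{rk}(Y)=d$ over both $\F_q$ and $\F_{q^m}$, so that $\overline{V^\perp}=\{x\in\F_{q^m}^n:Yx^T=0\}$. Rank–nullity now gives the displayed identity, and the identical argument applied to $H$ yields $\sigma(V):=\text{rk}_{\F_{q^m}}(HY^T)=(n-k)-\dim_{\F_{q^m}}(\C^\perp\cap\overline{V^\perp})$.

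With these formulas the rest is routine linear algebra over $\F_{q^m}$. Applying the intrinsic formula to $V^\perp$ and using $(V^\perp)^\perp=V$ gives $\rho(V^\perp)=k-\dim(\C\cap\overline V)$, so, since $\rho(E)=k$, the dual rank function becomes
\[
\rho^*(V)=\dim(V)+\rho(V^\perp)-\rho(E)=d-\dim_{\F_{q^m}}(\C\cap\overline V).
\]
On the other hand, from $(\C+\overline V)^\perp=\C^\perp\cap\overline{V^\perp}$ together with $\dim(\C+\overline V)=k+d-\dim(\C\cap\overline V)$ I get $\dim(\C^\perp\cap\overline{V^\perp})=n-k-d+\dim(\C\cap\overline V)$, whence $\sigma(V)=d-\dim_{\F_{q^m}}(\C\cap\overline V)=\rho^*(V)$. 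As $V$ was arbitrary, $\rho^*=\sigma$ and the two $q$-matroids coincide.

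I expect the only genuinely delicate point to be the base-change compatibility of orthogonality used in the key step, i.e.\ that $\overline{V^\perp}$ really is the orthogonal complement of $\overline V$ in $\F_{q^m}^n$, since this is exactly where the $\F_q$-structure of the ground space meets the $\F_{q^m}$-structure of the code; everything after the intrinsic formula is bookkeeping with the standard identities $(A\cap B)^\perp=A^\perp+B^\perp$ and $\dim(A+B)=\dim A+\dim B-\dim(A\cap B)$. As an alternative I could instead invoke the earlier duality proposition, which identifies the bases of $\M^*$ with the orthogonal complements of the bases of $\M$, and merely check that $B$ is a basis of $\M_\C$ precisely when $B^\perp$ is a basis of $\M_{\C^\perp}$; but the rank-function computation above seems cleaner and sidesteps a separate cryptomorphism argument.
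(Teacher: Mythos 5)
The paper does not prove this proposition; it imports it verbatim as \cite[Theorem 48]{jp2018}, so there is no in-paper argument to compare against. Your proof is correct and self-contained. The intrinsic formula $\rho(V)=\dim\C-\dim_{\F_{q^m}}(\C\cap\overline{V^\perp})$ is established cleanly via rank–nullity for $\phi_V$, and you correctly identify and settle the one genuinely delicate point: that $\overline{V^\perp}=(\overline V)^\perp$, which follows because $V^\perp$ is the solution space of $Yx^T=0$ and $Y$ keeps rank $d$ under extension of scalars, so both sides are $(n-d)$-dimensional $\F_{q^m}$-spaces with one containing the other. The remaining bookkeeping with $(\C+\overline V)^\perp=\C^\perp\cap(\overline V)^\perp$ and the dimension formula is valid for the (non-degenerate) standard dot product on $\F_{q^m}^n$ even though its restrictions to subspaces may be degenerate, which is all that is needed. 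Two minor points worth making explicit if you write this up: the claim implicitly requires that the orthogonal complement in the definition of $\M^*$ be taken with respect to the same standard form used to define $\C^\perp$ (you do fix this, and it matches the paper's conventions), and the degenerate cases $k=0$ and $k=n$, where one of $\C$, $\C^\perp$ is the zero code and has no generator matrix in the sense of \Cref{rank_func_rep_q_mats}, should be disposed of separately by observing that both sides are then a uniform $q$-matroid of rank $0$ or $n$.
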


Let us illustrate the above theory with an example.

\begin{example}
    Let $\M_G=(\F_2^4,\B)$ be the $q$-matroid of rank $2$ in dimension $4$ associated to the matrix
    $$\begin{pmatrix}
     1&\alpha&0&0\\0&0&1&\alpha^2
     \end{pmatrix}\in\F_{2^4}^4,$$
     where $\alpha$ is the primitive element of the field extension $\F_{2^4}$ over $\F_2$. Then one can compute that the collection of bases of $\M_G$ is given by
     $$\B={\F_2^4\choose 2}_2\setminus\Bigg\{\begin{pmatrix}1&0&0&0\\0&1&0&0\end{pmatrix},\begin{pmatrix}0&0&1&0\\0&0&0&1\end{pmatrix}\Bigg\},$$
     which coincides with the $q$-matroid from \Cref{q_mat_example}.
     Therefore, this $q$-matroid is representable.
\end{example}

We end this subsection by describing a special kind of matrix rank-metric code, which we revisit in \Cref{lower_bound}.
The following result plays a key role in the definition of this special class of codes and was first proved by Delsarte~\cite{d1978}.

\begin{proposition}[Singleton-like Bound]\label{Singelton_blound}
    Let $\C\leq\F_q^{n\times m}$ be a $k$-dimensional matrix rank-metric code with minimal rank distance $d:=d(\C)$. Then we have
    \[
        k\leq\max(m,n)(\min(m,n)-d+1).
	\]
\end{proposition}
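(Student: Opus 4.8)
The plan is to exploit the defining property of the rank distance---that every nonzero codeword has rank at least $d$---by constructing an injective $\F_q$-linear \emph{puncturing} map from $\C$ into a smaller matrix space and then comparing dimensions. Since the rank of a matrix is invariant under transposition, and transposing a matrix rank-metric code yields another matrix rank-metric code of the same dimension and minimal rank distance, I may assume without loss of generality that $n\leq m$, so that $\min(m,n)=n$ and $\max(m,n)=m$; the goal then becomes $k\leq m(n-d+1)$.

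First I would define the $\F_q$-linear projection $\pi:\F_q^{n\times m}\rightarrow\F_q^{(n-d+1)\times m}$ that deletes the last $d-1$ rows of a matrix. The crucial claim is that the restriction of $\pi$ to $\C$ is injective. To see this, suppose $A,B\in\C$ satisfy $\pi(A)=\pi(B)$. Then the first $n-d+1$ rows of $A-B$ vanish, so $A-B$ has at most $d-1$ nonzero rows and hence $\text{rk}(A-B)\leq d-1$. Since $\C$ is linear, $A-B\in\C$, and the minimal rank distance $d$ forces $A-B=0$, i.e.\ $A=B$.

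With injectivity in hand, the bound follows by a dimension count: the image $\pi(\C)$ is an $\F_q$-subspace of $\F_q^{(n-d+1)\times m}$, a space of $\F_q$-dimension $m(n-d+1)$, and injectivity gives $\dim_{\F_q}\pi(\C)=\dim_{\F_q}\C=k$, so
\[
k=\dim_{\F_q}\C=\dim_{\F_q}\pi(\C)\leq m(n-d+1)=\max(m,n)(\min(m,n)-d+1).
\]

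The argument is short, and I do not anticipate a serious obstacle; the only points requiring care are the reduction to the case $n\leq m$, where one must confirm that transposition preserves both the dimension and the minimal rank distance, together with the elementary observation that a matrix with at most $d-1$ nonzero rows has rank at most $d-1$. Alternatively, a symmetric puncturing map deleting the last $d-1$ \emph{columns} handles the case $m\leq n$ directly, which would let one avoid the transposition reduction entirely and treat both cases by the same projection principle.
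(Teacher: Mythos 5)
Your argument is correct. The paper itself gives no proof of this proposition; it simply cites Delsarte, so there is nothing internal to compare against. What you have written is the standard "puncturing" proof of the rank-metric Singleton bound: after reducing to $n\leq m$ via transposition (which indeed preserves linearity, $\F_q$-dimension, and minimal rank distance, since $\mathrm{rk}(A^T)=\mathrm{rk}(A)$), the row-deletion map is injective on $\C$ precisely because a difference of two codewords agreeing in the first $n-d+1$ rows has at most $d-1$ nonzero rows and hence rank at most $d-1$, which the minimal distance forbids for a nonzero codeword of the linear code $\C$. The dimension count then gives $k\leq m(n-d+1)$. Two small points worth making explicit: the statement implicitly assumes $\C\neq\{0\}$, since otherwise $d(\C)$ is a minimum over the empty set; and once a nonzero codeword exists, its rank is at most $\min(m,n)$, so $d\leq\min(m,n)$ and the target space $\F_q^{(n-d+1)\times m}$ is genuinely nonempty. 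Neither affects the validity of your proof, and your closing observation that column deletion handles the case $m\leq n$ symmetrically is also correct.
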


Since vector rank-metric and linear matrix rank-metric codes are related, we can also state the above singleton-like bound for the former codes, see \cite[Remark 3.6.]{g2019}. More precisely if $\C\leq\F_{q^m}^{n}$ is a $k$-dimensional vector rank-metric code with minimal rank distance $d:=d_{\text{rk}}(\C)$, it must then hold that
\[
        k\leq n-d+1.
\]
Matrix or vector rank-metric codes attaining the above bounds are called \textbf{maximal (matrix/vector) rank-metric codes (MRD codes)}.  Their connection to $q$-matroids is stated in the next example.

\begin{example}
    Let $0<k<n$ and $\C$ be a $k$-dimensional vector MRD code in $\F_{q^m}^n$. Such a code can only exist in the case $m\geq n$ and the $q$-matroid associated to $\C$ is the uniform $q$-matroid $\mathcal{U}_{k,n}(\F_q^n)$ described in \Cref{exp:uniform}, see~\cite[Example 2.4.]{gj2023}. 
    In other words, the uniform $q$-matroid $\mathcal{U}_{k,n}(\F_q^n)$ is representable over $\F_{q^m}^n$ if and only if $m\geq n$.
\end{example}

\section{A lower bound for the number of $q$-matroids}
\label{lower_bound}

In this section, we give a lower bound on the number of $q$-matroids in a fixed dimension. For this purpose, we use the so-called subspace codes and relate them with the paving $q$-matroid construction presented in \Cref{q_mats}.

We start with a brief overview of subspaces codes, see \cite{skk2008,hk2017} for more details.
We consider the set $\cL(\F_q^n)$ endowed with the \textbf{subspace distance} $d_{\text{S}}$, defined as
\[
	d_{\text{S}}(V,W):=\dim(V)+\dim(W)-2\dim(V\cap W)\quad \text{for all } V,W\in\cL(\F_q^n).
\]
Then the pair $(\cL(\F_q^n),d_{\text{S}})$ is a metric space.

\begin{definition}
    A non-empty subset $\C\subseteq\cL(\F_q^n)$ is called a \textbf{subspace code}. The \textbf{minimum subspace distance} is given by
    $$d_\text{S}(\C):=\min\{d_\text{S}(V,W)\;|\;V,W\in\C,V\not=W\}.$$
    Let $\C$ be a subspace code. If the dimensions of all elements of $\C$ are equal, it is called a \textbf{constant dimension code (CDC)}.
    We denote by $A_q(n,d;k)$ the maximal cardinality of a constant dimension code $\C\subseteq{\F_q^n\choose k}_q$ with minimal subspace distance $d_\text{S}(\C)\geq d$.
\end{definition}

One part of our result relies on the following lower bound on the maximal cardinality of constant dimension codes.
\begin{proposition}\label{lower_bound_max_card_CDC}
For $2k\leq n$ and $d\geq 4$, it holds that
\[
    q^{(n-k)\cdot(k-\frac{d}{2}+1)}\leq A_q(n,d;k).
\]
\end{proposition}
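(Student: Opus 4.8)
The plan is to exhibit a concrete code of the stated size via the \emph{lifting construction}, which turns a matrix rank-metric code into a constant dimension code while translating the rank metric into twice the subspace metric. Since the subspace distance between two $k$-dimensional subspaces is always even, I may assume without loss of generality that $d$ is even and set $\delta:=d/2\geq 2$, using $d\geq 4$. Because $2k\leq n$ we have $\min(k,n-k)=k$ and $\max(k,n-k)=n-k$; moreover I may assume $\delta\leq k$, for otherwise the exponent $k-\delta+1$ is nonpositive and the claimed inequality holds trivially, as any single subspace is a constant dimension code of cardinality one.

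First I would take a linear matrix rank-metric code $\mathcal{D}\leq\F_q^{k\times(n-k)}$ with minimal rank distance $\delta$ attaining the Singleton-like bound of \Cref{Singelton_blound}; such MRD codes exist for all admissible parameters, for instance the Gabidulin codes. In the notation of that proposition the code lives in $k\times(n-k)$ matrices, so equality in the bound reads
\[
\dim_{\F_q}\mathcal{D}=(n-k)(k-\delta+1),\qquad\text{hence}\qquad |\mathcal{D}|=q^{(n-k)(k-\delta+1)}=q^{(n-k)(k-\frac{d}{2}+1)}.
\]

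Next I would lift $\mathcal{D}$ to subspaces by associating to each $M\in\mathcal{D}$ the row space $\lambda(M):=\text{rowspan}\big(\,[\,I_k\mid M\,]\,\big)\in{\F_q^n\choose k}_q$, where $I_k$ is the $k\times k$ identity matrix. The map $\lambda$ is injective, since $M$ is recovered from $\lambda(M)$ as the unique row-reduced representative carrying an identity block in the first $k$ coordinates; thus $\C:=\lambda(\mathcal{D})$ is a constant dimension code with $|\C|=|\mathcal{D}|$. The crux is the distance identity: for $M,N\in\mathcal{D}$ the rows of $[\,I_k\mid M\,]$ and $[\,I_k\mid N\,]$ jointly span the row space of the block matrix with rows $[\,I_k\mid M\,]$ and $[\,0\mid N-M\,]$, so $\dim(\lambda(M)+\lambda(N))=k+\text{rk}(N-M)$, and the dimension formula gives
\[
d_{\text{S}}(\lambda(M),\lambda(N))=2\big(\dim(\lambda(M)+\lambda(N))-k\big)=2\,\text{rk}(N-M)\geq 2\delta=d.
\]
Therefore $\C\subseteq{\F_q^n\choose k}_q$ has minimum subspace distance at least $d$, and by the definition of $A_q(n,d;k)$ I conclude $A_q(n,d;k)\geq|\C|=q^{(n-k)(k-\frac{d}{2}+1)}$.

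The main obstacle is the distance-doubling identity $d_{\text{S}}(\lambda(M),\lambda(N))=2\,\text{rk}(N-M)$ for the lifting map; once this standard but slightly fiddly piece of linear algebra is verified, the remainder is bookkeeping together with an appeal to the existence of MRD codes meeting \Cref{Singelton_blound}.
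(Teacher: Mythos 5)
Your construction is precisely the one the paper invokes: the paper merely cites the lifting construction of \cite{skk2008} (prefix an identity block to the codewords of an MRD code in $\F_q^{k\times(n-k)}$ and take row spaces) without carrying out the verification, whereas you supply the details — injectivity of the lifting, the distance-doubling identity $d_{\text{S}}(\lambda(M),\lambda(N))=2\,\text{rk}(N-M)$, and the cardinality count via the Singleton-like bound — all of which are correct. One small caveat: your ``without loss of generality $d$ is even'' step does not actually recover the stated exponent for odd $d$ (rounding $d$ up to the next even integer weakens the bound), but the statement is only meaningful, and is only applied in the paper, for even $d$ (indeed $d=4$), so this is immaterial.
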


This inequality is a consequence of \cite[Section IV]{skk2008}.
The authors essentially use a specific lifting construction to obtain constant dimension codes from MRD codes in $\F_q^{k\times (n-k)}$.
This is achieved by putting an identity matrix as prefix in front of the codewords of the MRD code and then taking the row space of the resulting matrices.
The resulting constant dimension code inherits the distance properties of its underlying matrix rank-metric code. Moreover it also preserves the cardinality of the underlying matrix rank-metric code, which finally yields the lower bound on $A_q(n,d;k)$.    

Next, we describe a connection between CDC's and paving $q$-matroids introduced in \Cref{q_mats}.

\begin{lemma} \label{connection_paving_cdc}
	Let $\mathcal{S}\subseteq\cL(\F_q^n)$ be a CDC of dimension $k$ and minimal distance at least $4$.
	Then $\mathcal{S}$ fulfills the assumption of \Cref{paving_construction} and therefore yields a paving $q$-matroid.
\end{lemma}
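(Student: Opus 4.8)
The plan is to unwind both definitions and show the distance condition on the CDC forces the intersection bound required by \Cref{paving_construction}. Recall that \Cref{paving_construction} demands a collection of $k$-dimensional subspaces such that any two distinct members $V,W$ satisfy $\dim(V\cap W)\leq k-2$. Since $\mathcal{S}$ is a constant dimension code of dimension $k$, every element already has the right dimension, so the only thing to verify is the pairwise intersection bound.

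First I would take two distinct subspaces $V,W\in\mathcal{S}$ and invoke the hypothesis that the minimal subspace distance of $\mathcal{S}$ is at least $4$, which gives $d_{\text{S}}(V,W)\geq 4$. Next I would substitute the definition of the subspace distance, namely $d_{\text{S}}(V,W)=\dim(V)+\dim(W)-2\dim(V\cap W)$, and use $\dim(V)=\dim(W)=k$ to rewrite this as $2k-2\dim(V\cap W)\geq 4$. Rearranging yields $\dim(V\cap W)\leq k-2$, which is exactly the hypothesis of \Cref{paving_construction}.

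The final step is simply to observe that the assumption $1\leq k\leq n-1$ implicit in the paving construction is available (for a nontrivial CDC of dimension $k$ with two distinct elements we need $0<k<n$), so all hypotheses of \Cref{paving_construction} are met and it produces a paving $q$-matroid of rank $k$ whose rank-$(k-1)$ circuits are exactly the members of $\mathcal{S}$.

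This is a short computation rather than a deep argument, so there is no real obstacle. The only point worth stating carefully is that the inequality chain is an equivalence under the constant-dimension assumption: the bound $\dim(V\cap W)\leq k-2$ is precisely equivalent to $d_{\text{S}}(V,W)\geq 4$ when both subspaces have dimension $k$, which is why the threshold $4$ appears in the hypothesis. I would make sure to note that we need $k\geq 2$ for the statement $\dim(V\cap W)\leq k-2$ to be meaningful (and indeed consistent with a code having more than one codeword), but this is guaranteed whenever the distance-$4$ condition can be satisfied nontrivially.
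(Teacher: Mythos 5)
Your proof is correct and follows exactly the same route as the paper: take two distinct $V,W\in\mathcal{S}$, plug the constant dimension $k$ into the subspace-distance formula, and rearrange $2k-2\dim(V\cap W)\geq 4$ to get $\dim(V\cap W)\leq k-2$. The extra remarks about the implicit bounds on $k$ are a reasonable touch but not a substantive difference.
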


\begin{proof}
	Let $V,W\in \mathcal{S}$ be two distinct subspaces.
	By construction we thus have
	\[
	d_{\mathcal{S}}(V,W)=\dim(V) +\dim(W)-2\dim(V\cap W)\ge 4.
	\]
	Since $V,W$ are both $k$-dimensional subspaces, this yields
	$
	\dim(V\cap W)\le k-2
	$
	as desired.
\end{proof}

Let us mention here another connection between paving $q$-matroids and graph theory.

\begin{remark}\label{grassmann_graph_remark}
    Consider the simple graph $J_q(n,k)$, with vertex set given by $\mathcal{V}={\F_q^n\choose k}_q$ and the adjacency relation for two vertices $V,W\in\mathcal{V}$ defined via the condition $\dim(V\cap W)=k-1$. This graph is called the \textbf{Grassmann graph} or \textbf{$q$-Johnson graph}, see \cite{hk2017}.
    
    An \textbf{independent set} $I$ of $J_q(n,k)$ is a subset of the vertex set such that no two vertices in this set are connected by an edge, i.e., a subset $I\subseteq\mathcal{V}$ such that for every two vertices $V,W\in I$ the inequality $\dim(V\cap W)\leq k-2$ is satisfied.
    
    Thus, the independent sets of $J_q(n,k)$ yield paving $q$-matroids via \Cref{paving_construction}. 
  	Note that this is the $q$-analogue of the well-known statement for usual matroids, namely that the independent sets of the Johnson graph are in one-to-one correspondence with sparse paving matroids.
  	This fact is a crucial ingredient in several asymptotic results on matroids, e.g., Knuth's first lower bound on the number of matroids~\cite{Knuth}.
  	We believe that this $q$-analogue observation could also turn out to be useful in futures research on the asymptotic of $q$-matroids.
\end{remark}

Now we prove a lower bound on the number $\mathcal{N}_{q}(k,n)$ of $q$-matroids of rank $k$ in a fixed dimension $n\geq 1$, and subsequently give the lower bound on the number of all $q$-matroids, which will be an immediate consequence of \Cref{lower_bound_q_mats}. Recall that we can restrict ourselves to the case $0\leq k\leq \lfloor\frac{n}{2}\rfloor$ by $q$-matroid duality.

\begin{theorem}
    Let $n\geq 4$ be an integer and $2\leq k\leq \lfloor\frac{n}{2}\rfloor$. Then the number $\mathcal{N}_{q}(k,n)$ of $q$-matroids of rank $k$ on $\F_q^n$ satisfies
    \[
        2^{q^{(n-k)\cdot(k-1)}}< \mathcal{N}_{q}(k,n).
    \]
    \label{lower_bound_q_mats}
\end{theorem}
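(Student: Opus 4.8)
The plan is to count paving $q$-matroids by choosing subsystems of a sufficiently large constant dimension code, since each distinct choice yields a distinct $q$-matroid via \Cref{paving_construction}. The strategy combines the lower bound on the cardinality of constant dimension codes from \Cref{lower_bound_max_card_CDC} with the construction-to-matroid injection provided by \Cref{connection_paving_cdc}.

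First I would apply \Cref{lower_bound_max_card_CDC} with $d=4$ to obtain a constant dimension code $\mathcal{C}\subseteq{\F_q^n\choose k}_q$ of minimal distance at least $4$ whose cardinality satisfies $|\mathcal{C}|\geq A_q(n,4;k)\geq q^{(n-k)\cdot(k-1)}$; the hypotheses $2k\leq n$ and $d\geq 4$ are met since $k\leq\lfloor\frac{n}{2}\rfloor$. By \Cref{connection_paving_cdc}, this code $\mathcal{C}$, and in fact \emph{any} subset $\mathcal{S}\subseteq\mathcal{C}$, satisfies the intersection hypothesis $\dim(V\cap W)\leq k-2$ of \Cref{paving_construction}, because any two distinct elements of $\mathcal{C}$ already have pairwise intersection of dimension at most $k-2$ (this is inherited by all subsets). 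Hence every subset $\mathcal{S}\subseteq\mathcal{C}$ gives rise to a paving $q$-matroid of rank $k$ on $\F_q^n$.

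The key step is then to argue that distinct subsets $\mathcal{S}$ produce distinct $q$-matroids, so that the map from subsets to $q$-matroids is injective. This follows from the last clause of \Cref{paving_construction}: the circuits of rank $k-1$ of the resulting $q$-matroid are \emph{exactly} the subspaces in $\mathcal{S}$. Thus $\mathcal{S}$ can be recovered from the $q$-matroid as its set of $(k-1)$-rank circuits, which makes the correspondence injective. Consequently the number of rank-$k$ paving $q$-matroids, and a fortiori $\mathcal{N}_{q}(k,n)$, is at least the number of subsets of $\mathcal{C}$, namely
\[
    \mathcal{N}_{q}(k,n)\geq 2^{|\mathcal{C}|}\geq 2^{A_q(n,4;k)}\geq 2^{q^{(n-k)\cdot(k-1)}},
\]
and the strictness of the stated inequality comes from the fact that not all $q$-matroids on $\F_q^n$ of rank $k$ are of this paving form (for instance uniform $q$-matroids of other ranks, or non-paving examples), so at least one additional $q$-matroid exists beyond this count.

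I expect the main obstacle to be the injectivity bookkeeping rather than the counting itself: one must be careful that the recovery of $\mathcal{S}$ as the collection of rank-$(k-1)$ circuits is genuinely well-defined and distinguishes all subsets, including the empty set (which yields the uniform $q$-matroid $\mathcal{U}_{k,n}$). A minor point to verify is that the construction in \Cref{paving_construction} indeed requires $1\leq k\leq n-1$, which is guaranteed here since $2\leq k\leq\lfloor\frac{n}{2}\rfloor$ with $n\geq 4$. Once injectivity is secured, the chain of inequalities above is immediate and completes the proof.
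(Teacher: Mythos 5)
Your proposal is correct and follows essentially the same route as the paper: take a maximal constant dimension code of minimum distance $4$, note via \Cref{lower_bound_max_card_CDC} and \Cref{connection_paving_cdc} that every subset satisfies the hypothesis of \Cref{paving_construction}, and count the $2^{|\mathcal{C}|}\geq 2^{q^{(n-k)(k-1)}}$ resulting paving $q$-matroids. You are in fact slightly more careful than the paper, which leaves the injectivity (recovering $\mathcal{S}$ as the set of rank-$(k-1)$ circuits) and the strictness of the inequality implicit; only your parenthetical example of ``uniform $q$-matroids of other ranks'' is off, since those do not have rank $k$, but your non-paving examples do settle strictness.
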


\begin{proof}
    Let $\mathcal{S}$ be a CDC of maximal size, having minimal subspace distance $d_\text{S}(\mathcal{S})=4$. Since $2 k\leq n$ all the conditions of the estimation in \Cref{lower_bound_max_card_CDC} are fulfilled and thus
    \[
        Q:=q^{(n-k)\cdot(k-4/2+1)}\leq |\mathcal{S}|.
    \]
    Moreover by \Cref{connection_paving_cdc} the set $\mathcal{S}$ forms a paving $q$-matroid in the sense of \Cref{paving_construction}. Then all its subsets also satisfy the intersection-dimension condition of \Cref{paving_construction}, therefore all of them form paving $q$-matroids as well.
    Furthermore all of these $q$-matroids are indeed pairwise distinct since their circuits of rank $k-1$ are exactly the subspaces specified by the subset of $\mathcal{S}$.
    This implies that we have at least $2^Q$-many paving $q$-matroids of rank $k$ on $\F_q^n$, which yields the desired inequality and completes the proof.
    \end{proof}

\begin{corollary}
    Let $n\geq 4$ and let $\mathcal{N}_{q}(n)$ denote the number of all $q$-matroids on $E=\F_q^n$. Then
    \[
        2^{q^{(n-\lfloor\frac{n}{2}\rfloor)\cdot(\lfloor\frac{n}{2}\rfloor-1)}}< \mathcal{N}_{q}(n).
    \]
    \label{corrollary_lower_bound_q_mats}
\end{corollary}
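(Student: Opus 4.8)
The plan is to derive this directly from \Cref{lower_bound_q_mats}, the theorem stated immediately above, by choosing the rank $k$ that optimizes its lower bound. Since every $q$-matroid of rank $k$ on $\F_q^n$ is in particular a $q$-matroid on $\F_q^n$, we have the trivial inequality $\mathcal{N}_{q}(k,n)\le \mathcal{N}_{q}(n)$ for every admissible $k$. Hence it suffices to exhibit a single value of $k$ for which \Cref{lower_bound_q_mats} already produces the claimed exponent, and the natural candidate is the $k$ maximizing the exponent $(n-k)(k-1)$.

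The only substantive step is this optimization. Writing $f(k)=(n-k)(k-1)=-k^2+(n+1)k-n$, I would observe that $f$ is a downward-opening parabola with vertex at $k=\tfrac{n+1}{2}$. Because $\lfloor\tfrac{n}{2}\rfloor\le \tfrac{n}{2}<\tfrac{n+1}{2}$, the function $f$ is strictly increasing on the admissible interval $2\le k\le\lfloor\tfrac{n}{2}\rfloor$, so its maximum there is attained at the right endpoint $k=\lfloor\tfrac{n}{2}\rfloor$. The hypothesis $n\ge 4$ ensures $\lfloor\tfrac{n}{2}\rfloor\ge 2$, so this $k$ is a legitimate choice for \Cref{lower_bound_q_mats}.

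Substituting $k=\lfloor\tfrac{n}{2}\rfloor$ into \Cref{lower_bound_q_mats} then yields
\[
2^{q^{(n-\lfloor\frac{n}{2}\rfloor)\cdot(\lfloor\frac{n}{2}\rfloor-1)}}< \mathcal{N}_{q}(\lfloor\tfrac{n}{2}\rfloor,n)\le \mathcal{N}_{q}(n),
\]
which is exactly the asserted inequality. I do not expect any genuine obstacle here: all the substantive content lives in \Cref{lower_bound_q_mats}, and this corollary amounts to pure bookkeeping. The one point requiring a moment of care is the optimization — confirming that, thanks to the restriction $k\le\lfloor\tfrac{n}{2}\rfloor$ imposed by $q$-matroid duality, the exponent $(n-k)(k-1)$ is maximized at the top of the admissible range rather than near the vertex of the parabola at $\tfrac{n+1}{2}$.
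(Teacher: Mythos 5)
Your proposal is correct and matches the paper's proof, which likewise just applies \Cref{lower_bound_q_mats} with $k=\lfloor\frac{n}{2}\rfloor$ and uses $\mathcal{N}_{q}(k,n)\le\mathcal{N}_{q}(n)$. The additional optimization argument confirming that this $k$ maximizes the exponent is a nice sanity check but not needed for the corollary as stated.
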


\begin{proof}
    This is just \Cref{lower_bound_q_mats} applied to the case $k=\lfloor\frac{n}{2}\rfloor$.  
\end{proof}

Let us note here that the above bound is not sharp and one way to improve it would be to sum up all bounds for $2\leq k\leq \lfloor\frac{n}{2}\rfloor$.  

\section{An upper bound for the number of representable $q$-matroids}
\label{upper_bound}

\subsection{Zero patterns and their connection to representable $q$-matroids}

In this subsection, we start by giving a brief introduction to the theory of zero patterns, see \cite{rbg2000} for more details. Afterwards, we explain how these patterns are related to representable $q$-matroids. 

\begin{definition}
    We call a string of length $m$ over the alphabet $\{0,*\}$ a \textbf{zero pattern} of length $m$. Let $\K$ be field and $a\in\K$, then set
    \[
        \delta(a):=\begin{cases}
            \;\hfil0\;&\text{if } a=0,\\
            \;\hfil*\;&\text{otherwise}.
        \end{cases}
    \]
    When $a=(a_1,\ldots,a_s)\in\K^s$ we apply $\delta$ coordinate-wise, i.e., $\delta(a)=(\delta(a_1),\ldots,\delta(a_s))$ which is called the \textbf{zero pattern of $a$}.
    
    Lastly let $f=(f_1,\ldots,f_m)$ be a sequence of functions $f_i:D\rightarrow\K$ on a common domain set $D$. Now for an $a\in D$ we call $\delta(f,a):=(\delta(f_1(a)),\ldots,\delta(f_m(a)))$ a \textbf{zero pattern of $f$}.
\end{definition}

For our purposes, we only consider the case that $D=\K^s$ and the $f_i$ are polynomials in $K[x_1,\ldots,x_s]$. Moreover, let us denote the number of all zero patterns of $f$ as $a$ ranges over $D$ by $Z_\K(f)$.

\begin{example}\label{exp: zero_pattern}
    Let $\K=\overline{\F_2}$ and consider the sequence of polynomials in $\overline{\F_2}[x_1,x_2,x_3]$ given by:
    \[
        f=(x_1,x_2,x_3,x_1+x_2,x_1+x_3,x_2+x_3,x_1+x_2+x_3).
    \]
    The element $a=(0,0,1)\in\overline{\F_2}^3$ yields the zero pattern $\delta(f,a)=(0,0,*,0,*,*,*)\in Z_\K(f)$.
\end{example}

We now state a result concerning an upper bound on the number $Z_\K(f)$, which we use later to bound the number of representable $q$-matroids for fixed rank. The proof of the following Theorem is a direct consequence of \cite[Theorem 1.3.]{rbg2000}.

\begin{theorem}[{\cite{rbg2000}}]
    \label{zero_patterns_est}
    Let $f=(f_1,\ldots,f_m)$ be sequence of polynomials in $s$ variables over the field~$\K$, all having degree less or equal $d$ and assume $m\geq s$. Then for $d=1$ we have  $Z_\K(f)\leq\sum_{j=0}^s{m\choose j}$ and for $d\geq 2$ it holds that
    \[
        Z_\K(f)\leq{md\choose s}.
    \]
\end{theorem}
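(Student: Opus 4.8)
The plan is to follow the polynomial (linear-algebra) method of Rónyai, Babai and Ganapathy: attach to every \emph{realizable} zero pattern a single polynomial, show that these polynomials are linearly independent over $\K$, and then bound $Z_\K(f)$ by the dimension of a space of polynomials in which they all sit. The whole difficulty is concentrated in the final dimension count, which is exactly where the two regimes $d=1$ and $d\ge 2$ separate.

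First I would fix, for every zero pattern $u=(u_1,\dots,u_m)\in\{0,*\}^m$ that is actually attained by $f$, a witness point $a_u\in\K^s$ with $\delta(f,a_u)=u$, and write $S_u:=\{i : u_i=*\}$ for its support. To $u$ I attach the product
\[
g_u:=\prod_{i\in S_u} f_i\in\K[x_1,\dots,x_s],
\]
using the convention that the empty product equals $1$. Since each $f_i$ has degree at most $d$ and $|S_u|\le m$, every $g_u$ has degree at most $md$; for $d=1$ it is a product of at most $m$ affine-linear forms.

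The key step is the linear independence of the family $\{g_u\}_u$. Consider the square matrix $M=\bigl(g_v(a_u)\bigr)_{u,v}$ indexed by the realizable patterns. By the defining property of the witnesses, $f_i(a_u)\ne 0$ precisely when $i\in S_u$, so $g_v(a_u)=\prod_{i\in S_v}f_i(a_u)$ is nonzero exactly when $S_v\subseteq S_u$. Ordering the patterns by increasing $|S_u|$, the diagonal entries $g_u(a_u)=\prod_{i\in S_u}f_i(a_u)$ are nonzero, while $g_v(a_u)=0$ whenever $|S_v|>|S_u|$, and also when $|S_v|=|S_u|$ with $S_v\ne S_u$; hence $M$ is triangular with nonzero diagonal and therefore nonsingular. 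Any relation $\sum_v c_v g_v=0$, evaluated at the points $a_u$, becomes $Mc=0$, forcing $c=0$. Thus $Z_\K(f)$, the number of realizable patterns, equals the number of the $g_u$ and is bounded by the dimension of any $\K$-subspace containing all of them.

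It remains to bound that dimension, and this is where the real content of \cite{rbg2000} enters. The crude estimate places the $g_u$ in the space of all polynomials of degree at most $md$, of dimension $\binom{md+s}{s}$; the sharper bound $\binom{md}{s}$ claimed for $d\ge 2$ is precisely the refined count of \cite[Theorem 1.3]{rbg2000}, which exploits that each $g_u$ is a product of the $f_i$ rather than an arbitrary polynomial. For $d=1$ the factors are affine-linear forms, whose linear parts span a space of dimension at most $s$, and a separate counting argument in \cite{rbg2000} lets one restrict to supports of size at most $s$, bounding the number of independent $g_u$ by the number $\sum_{j=0}^s\binom{m}{j}$ of subsets of $\{1,\dots,m\}$ of size at most $s$ (the hypothesis $m\ge s$ is what makes these two target quantities the natural bounds). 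I expect the main obstacle to be exactly this last step for $d\ge 2$, namely upgrading the easy $\binom{md+s}{s}$ to the stated $\binom{md}{s}$; rather than reprove it, I would quote \cite[Theorem 1.3]{rbg2000} directly, since the statement here is an immediate specialization of it.
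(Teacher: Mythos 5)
Your proposal is correct and lands in essentially the same place as the paper: the paper gives no proof of this statement beyond observing that it is a direct consequence of \cite[Theorem~1.3]{rbg2000}, and your argument likewise obtains the two stated bounds, $\binom{md}{s}$ for $d\ge 2$ and $\sum_{j=0}^{s}\binom{m}{j}$ for $d=1$, by quoting that same theorem. The linear-independence argument you sketch for the products $g_u$ (the triangular evaluation matrix at witness points) is the correct core of the R\'onyai--Babai--Ganapathy method and is accurate as far as it goes, but since the refined dimension counts that constitute the actual claim are deferred to the citation in both your write-up and the paper, the two approaches coincide.
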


\begin{example}[Continuation of~\Cref{exp: zero_pattern}]
    Continuing~\Cref{exp: zero_pattern}, the sequence $f$ has the parameters $m=7$, $s=3$ and $d=1$. Applying \Cref{zero_patterns_est} yields
    \[
        Z_{\overline{\F_2}}(f)\leq\sum_{j=0}^3{7\choose j}=64.
    \]
    Hence, there are at most $64$ zero patterns of the sequence $f$ over $\overline{\F_2}$.
    The above theorem does not depend on the field, so this upper bound holds in fact over an arbitrary field $\K$.
\end{example}

Next, we want to relate these zero patterns to representable $q$-matroids. To this end, we define the following sequence of polynomials.

\begin{definition}\label{Construction_polynomials}
    Let $n\geq 1$ be an integer and $1\leq k\leq n$. Moreover let us consider the vector space $\F_q^n$ and fix a total ordering on the set of all $k$-dimensional subspaces of $\F_q^n$, i.e., $U_1,\ldots,U_{{n \choose k}_q}$. We define the $(k\times n)$-matrix
    \[
        G=\begin{pmatrix}
            x_{1,1}&\cdots&x_{1,n}\\
            \vdots&\ddots&\vdots\\
            x_{k,1}&\cdots&x_{k,n}
        \end{pmatrix},
    \]
    where the $x_{i,j}$'s are the indeterminates of a polynomial ring $P$ over the algebraic closure of $\F_q$, i.e., $P=\overline{{\F_q}}[x_{i,j}\;|\;1\leq i \leq k, 1\leq j\leq n]$.
    Each $k$-dimensional space $U_i\in{\F_q^n \choose k}_q$ can be regarded as the row space of a matrix $Y_{U_i}\in\F_q^{k\times n}$, i.e., $\text{rowspan}_{\F_q}(Y_{U_i})=U_i$ for all $i=1,\ldots,{n \choose k}_q$.
    Now we define ${n \choose k}_q$-many homogeneous polynomials of degree $k$ in $P$, via
    \[
        f_{U_i}(\boldsymbol{x})=\det(G\cdot Y_{U_i}^T)\quad \text{for all } i=1,\ldots,{n \choose k}_q,
    \]
    where we set $\boldsymbol{x}=(x_{i,j})_{1\leq i\leq k,1\leq j\leq n}$.\ Finally denote by $\mathcal{F}^q_{n,k}$ the sequence of the above polynomials, i.e., $\mathcal{F}^q_{n,k}:=(f_{U_i})_{1\leq i\leq{n \choose k}_q}$.
\end{definition}

The following lemma provides a characterization of representable $q$-matroids in terms of zero patterns of the sequence of polynomials $\mathcal{F}^q_{n,k}$. Let us denote the set of all zero patterns of $\mathcal{F}^q_{n,k}$ by $P_{\overline{{\F_q}}}(\mathcal{F}_{n,k})$.

\begin{lemma}
\label{zero_patterns_rep_q_mat_}
    Let $\M$ be a $q$-matroid of rank $k$ on $\F_q^n$ and $\B$, $\mathcal{NB}$ its collection of bases and non-bases, respectively. Then $\M$ is representable if and only if there exists a zero pattern of $\mathcal{F}^q_{n,k}$ for some $u\in\overline{{\F_q}}^{kn}$ of the form
    \[
        \delta(\mathcal{F}^q_{n,k},u)=(f_{U_i}(u))_{1\leq i\leq{n \choose k}_q}=
        \begin{cases}
            \;\hfil0\;&\emph{if } U_i\in\mathcal{NB},\\
            \;\hfil*\;&\emph{if } U_i\in\B.
        \end{cases}
    \]
\end{lemma}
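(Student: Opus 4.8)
The plan is to unpack what it means for a rank-$k$ $q$-matroid on $\F_q^n$ to be representable and to match this directly against the definition of a zero pattern of the polynomial sequence $\mathcal{F}_{n,k}$. By \Cref{rank_func_rep_q_mats}, a $q$-matroid $\M$ is representable precisely when there is a field extension $\F_{q^m}$ and a matrix $G_0 \in \F_{q^m}^{k\times n}$ of rank $k$ whose induced rank function agrees with that of $\M$. The key observation is that for a rank-$k$ $q$-matroid it suffices to know which $k$-dimensional subspaces are bases: a $k$-space $U_i$ is a basis of $\M_{G_0}$ if and only if $\rho(U_i) = k$, which happens exactly when $\text{rk}_{\F_{q^m}}(G_0 Y_{U_i}^T) = k$, i.e. when the $k\times k$ matrix $G_0 Y_{U_i}^T$ is invertible, i.e. when $\det(G_0 Y_{U_i}^T) = f_{U_i}(G_0) \neq 0$. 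Thus $U_i \in \B$ iff $\delta(f_{U_i}(G_0)) = *$ and $U_i \in \mathcal{NB}$ iff $\delta(f_{U_i}(G_0)) = 0$, which is exactly the stated form of the zero pattern.

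The two directions then run as follows. For the forward direction, suppose $\M$ is representable by some $G_0$ over $\F_{q^m} \subseteq \overline{\F_q}$. I would set $u \in \overline{\F_q}^{kn}$ to be the entry-vector of $G_0$; the computation above shows $\delta(\mathcal{F}_{n,k}, u)$ has the required shape. For the converse, suppose there is $u \in \overline{\F_q}^{kn}$ whose zero pattern matches the basis/non-basis split of $\M$. I would let $G_0$ be the $k\times n$ matrix over $\overline{\F_q}$ with entries $u$, observe that since $\M$ has rank $k$ it has at least one basis, so some $f_{U_i}(u) \neq 0$, forcing $\text{rk}(G_0) = k$, and then invoke \Cref{rank_func_rep_q_mats} to conclude that $\M_{G_0}$ is a genuine rank-$k$ $q$-matroid whose bases coincide with those of $\M$. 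The final point is that two rank-$k$ $q$-matroids on $\F_q^n$ with the same bases are equal, since the rank function of a $q$-matroid is determined by its collection of bases (cryptomorphism), giving $\M_{G_0} = \M$.

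The main obstacle, and the step deserving the most care, is the passage from a representation living in a finite extension $\F_{q^m}$ (as in the \emph{definition} of representability, where $m$ is existentially quantified) to a point $u$ in $\overline{\F_q}^{kn}$, and back again. In the forward direction this is immediate since every $\F_{q^m}$ sits inside $\overline{\F_q}$. The subtle converse requires noting that any $u \in \overline{\F_q}^{kn}$ has all its finitely many coordinates in some finite subextension $\F_{q^m}$, so that $G_0$ can be regarded as a matrix over $\F_{q^m}$ and hence defines a bona fide representation over a finite field as the definition demands. I would also make explicit why it is enough to control only the values $\rho(U_i)$ on $k$-dimensional subspaces rather than $\rho$ on all of $\cL(\F_q^n)$: this is precisely the content of the basis-cryptomorphism for $q$-matroids, and I would cite the relevant characterization from \cite{bcj2022,cj2023}. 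A secondary point worth stating is that $f_{U_i}$ genuinely has degree $k$ and is well-defined independent of the chosen generator matrix $Y_{U_i}$ up to a nonzero scalar, so that its vanishing or non-vanishing — the only data recorded by $\delta$ — is intrinsic to the subspace $U_i$.
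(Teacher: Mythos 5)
Your proof is correct and follows essentially the same route as the paper's: read off the zero pattern from the entries of a representing matrix in one direction, and reassemble the entries of $u$ into a representing matrix over a finite subextension of $\overline{\F_q}$ in the other. In fact you make explicit two points the paper's proof glosses over — that a point of $\overline{\F_q}^{kn}$ descends to some finite $\F_{q^m}$, and that the basis cryptomorphism is what lets you conclude $\M_{G_0}=\M$ from agreement of bases — so your write-up is, if anything, more complete than the original.
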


\begin{proof}
    On the one hand, if $\M$ is representable, then there exists a $(k\times n)$-matrix $G$ in $\F_{q^m}^{k\times n}$, for some $m\geq 1$, representing $\M$, and the entries of $G$ from a vector $u\in\overline{{\F_q}}^{kn}$ such that all $f_{U_i}$ corresponding to non-bases vanish and those corresponding to bases do not. Thus we get the above described zero pattern in $P_{\overline{{\F_q}}}(\mathcal{F}^q_{n,k})$, which concludes the proof of the first direction.
    
    On the other hand, given such a zero pattern $\delta(\mathcal{F}^q_{n,k},u)$ for some $u\in\overline{{\F_q}}^{kn}$, the entries of $u$ form a full-rank $(k\times n)$-matrix $G$ in some $\F_{q^m}^{k\times n}$. Therefore, $\M$ is represented by $G$ and thus representable, which concludes the proof of the second direction.   
\end{proof}

\begin{example}
    Consider the vector space $\F_2^3$ and totally order its $1$-dimensional subspaces by
    \[
        (1 0 0)<(0 1 0)<(0 0 1)<(1 1 0)<(1 0 1)<(0 1 1)<(1 1 1).
    \]
    Using the matrix $G=\begin{pmatrix} x_1&x_2&x_3\end{pmatrix}$ over $\overline{\F_2}[x_1,x_2,x_3]$, the polynomial sequence $f$ from \Cref{exp: zero_pattern} agrees with the sequence $\mathcal{F}^2_{3,1}$ from the construction in \Cref{Construction_polynomials}. Furthermore let $\M$ be the $q$-matroid on $\F_2^3$ with the non-bases $\mathcal{NB}=\{(100),(010),(110)\}$ and bases $\B={\F_2^3 \choose 1}_2\setminus\mathcal{NB}$. According to \Cref{zero_patterns_rep_q_mat_} the $q$-matroid $\M$ is representable if and only if there exist $u\in\overline{\F_2}^3$ such that $\delta(\mathcal{F}^2_{3,1},u)=(0,0,*,0,*,*,*)$. Indeed setting $u=a=(0,0,1)$ as in \Cref{exp: zero_pattern} yields the above zero pattern which  therefore proves that $\M$ is representable.
\end{example}

Now denote by $\mathcal{R}_{q}(k,n)$ the number of representable $q$-matroids on $\F_q^n$ of rank $k$ in a fixed dimension $n\geq 1$. Then the following inequality is a direct consequence of \Cref{zero_patterns_rep_q_mat_}.

\begin{corollary}
    \label{num_rep_q_mats_num_zero_patterns_ineq.}
    Let $0\leq k\leq n$. Then 
    \[
      \mathcal{R}_{q}(k,n)<Z_{\overline{{\F_q}}}(\mathcal{F}^q_{n,k}).
    \]
\end{corollary}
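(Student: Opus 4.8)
The plan is to derive the inequality $\mathcal{R}_q(k,n) < Z_{\overline{\F_q}}(\mathcal{F}_{n,k})$ directly from the characterization established in \Cref{zero_patterns_rep_q_mat_}. The key observation is that \Cref{zero_patterns_rep_q_mat_} gives an \emph{injection} from the set of representable $q$-matroids of rank $k$ on $\F_q^n$ into the set $P_{\overline{\F_q}}(\mathcal{F}_{n,k})$ of zero patterns of the polynomial sequence $\mathcal{F}_{n,k}$.

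First I would argue that the assignment is well-defined: given a representable $q$-matroid $\M$ of rank $k$ on $\F_q^n$, the lemma guarantees the existence of at least one $u \in \overline{\F_q}^{kn}$ whose zero pattern $\delta(\mathcal{F}_{n,k},u)$ records exactly the bases and non-bases of $\M$. This produces a specific element of $P_{\overline{\F_q}}(\mathcal{F}_{n,k})$, so we obtain a map sending $\M$ to this zero pattern. Next I would verify injectivity. A $q$-matroid of rank $k$ is completely determined by its collection $\B$ of bases (equivalently, by partitioning the $k$-dimensional subspaces into bases and non-bases), and the zero pattern produced records precisely this partition via the rule that the $U_i$-th coordinate is $*$ exactly when $U_i \in \B$. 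Hence two distinct representable $q$-matroids of rank $k$, differing in their basis collections, yield distinct zero patterns, and the map is injective.

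Injectivity immediately gives $\mathcal{R}_q(k,n) \leq |P_{\overline{\F_q}}(\mathcal{F}_{n,k})| = Z_{\overline{\F_q}}(\mathcal{F}_{n,k})$. To upgrade the weak inequality to the strict one claimed, I would observe that not every zero pattern of $\mathcal{F}_{n,k}$ corresponds to a representable $q$-matroid of rank $k$: for instance, the all-zero pattern $\delta(\mathcal{F}_{n,k},0) = (0,\ldots,0)$ arises from $u=0$ but does not encode a rank-$k$ $q$-matroid, since the characterization in \Cref{zero_patterns_rep_q_mat_} requires the representing vector to give a full-rank matrix with at least one nonvanishing basis minor. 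Thus the image of the injection is a proper subset of $P_{\overline{\F_q}}(\mathcal{F}_{n,k})$, and the inequality is strict.

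I do not expect a serious obstacle here, as the corollary is essentially a counting consequence of the established bijit characterization. The one point requiring mild care is the strictness: I must exhibit at least one zero pattern that is genuinely not in the image. The all-zero pattern is the cleanest witness, but I would double-check that it is always attained (which it is, via $u=0$, since every $f_{U_i}$ is homogeneous of positive degree $k$ and hence vanishes at the origin) and that it never corresponds to a rank-$k$ representable $q$-matroid (which holds because such a matroid must possess at least one basis, forcing a $*$ in the pattern).
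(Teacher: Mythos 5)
Your proof is correct and follows essentially the same route as the paper: use \Cref{zero_patterns_rep_q_mat_} to map each representable $q$-matroid of rank $k$ injectively to its basis-recording zero pattern, then obtain strictness by exhibiting the all-zero pattern (realized at $u=0$) as a pattern not corresponding to any rank-$k$ $q$-matroid. The paper's proof is just a terser version of this; the only shared caveat is that the all-zero witness requires $k\neq 0$, which the paper notes explicitly and you handle implicitly via the positivity of the degree $k$.
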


\begin{proof}
    By \Cref{zero_patterns_rep_q_mat_} every representable $q$-matroid has an associated zero pattern in $P_{\overline{{\F_q}}}(\mathcal{F}^q_{n,k})$. But there are zero patterns in $P_{\overline{{\F_q}}}(\mathcal{F}^q_{n,k})$ which do not yield $q$-matroids of rank $k$, for instance $(0,\ldots,0)$ if $k\not=0$. Thus, we obtain the strict inequality stated above.
\end{proof}

\begin{remark}
    The above lemma can be used to algorithmically test, whether a given $q$-matroid is representable over some field using Gr\"obner bases.
    This parallels the well-known algorithm to test matroid representability over algebraically closed fields for instance described in~\cite[Theorem~6.8.14]{o2011}.
\end{remark}

\subsection{The upper bound and the proof of the main result}

In \Cref{lower_bound} we established a lower bound on the number of all $q$-matroids. In this subsection, we turn to the discussion of an upper bound on the number of representable $q$-matroids. This provides us with the last piece for the discussion about the asymptotic behavior of the representable $q$-matroids. We conclude the subsection with the proof of our main result, \Cref{q_mat_asymptotics}.

As in \Cref{lower_bound} we first prove an upper bound on the number $\mathcal{R}_{q}(k,n)$ for fixed $k$ and $n$ and afterward give the upper bound on the number of all representable $q$-matroids, which will be an immediate consequence of \Cref{upper_bound_rep_q_mats}.

Recall again that we restrict ourselves to the case $0\leq k\leq\lfloor\frac{n}{2}\rfloor$ by $q$-matroid duality.

\begin{theorem}
    Let $n\geq 2$ and $1\leq k\leq\lfloor\frac{n}{2}\rfloor$. Then the number of representable $q$-matroids of rank $k$ satisfies the following upper bounds, depending on the rank $k$.
    \begin{enumerate}
        \item If $k=1$ it holds that
        \[
            \mathcal{R}_{q}(1,n)<q^{\log_q(n)+n^2+n\log_q(e)}+1.
        \]

        \item If $2\leq k\leq \lfloor\frac{n}{2}\rfloor$ it holds that
        \[
            \mathcal{R}_{q}(k,n)< \Big(\frac{111}{32}\Big)^{kn}q^{k^2n^2-k^3n+k\log_q(e)n}=q^{n^2k^2-nk^3+nk\log_q(e)+nk\log_q(\frac{111}{32})}.
        \]

        \item Moreover there exists a bound independent of $k$, which holds for every $k\in\{2,\ldots,\lfloor\frac{n}{2}\rfloor\}$. This bound is given by
        \[
            \mathcal{R}_{q}(k,n)< \Big(\frac{111}{32}\Big)^\frac{n^2}{2}q^{\frac{n^2}{4}+\log_q(e)\frac{n^2}{2}}=q^{\frac{n^2}{4}+\log_q(e)\frac{n^2}{2}+\log_q(\frac{111}{32})\frac{n^2}{2}}.
        \]
    \end{enumerate}
    \label{upper_bound_rep_q_mats}
\end{theorem}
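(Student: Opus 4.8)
The plan is to apply the zero-pattern estimate from \Cref{zero_patterns_est} to the polynomial sequence $\mathcal{F}_{n,k}$ and combine it with the bound $\mathcal{R}_{q}(k,n)<Z_{\overline{\F_q}}(\mathcal{F}_{n,k})$ from \Cref{num_rep_q_mats_num_zero_patterns_ineq.}. First I would read off the relevant parameters of $\mathcal{F}_{n,k}$: by \Cref{Construction_polynomials} the sequence consists of $m={n\choose k}_q$ polynomials in $s=kn$ variables, each of degree $d=k$ (since $f_{U_i}=\det(G\cdot Y_{U_i}^T)$ is a determinant of a $k\times k$ matrix whose entries are linear in the indeterminates). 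One must first check the hypothesis $m\geq s$ of \Cref{zero_patterns_est}, i.e. ${n\choose k}_q\geq kn$; using the lower bound ${n\choose k}_q\geq q^{(n-k)k}$ from \Cref{Estimations_q_binom} this holds comfortably in the stated range $n\geq 2$, $1\leq k\leq\lfloor n/2\rfloor$, and I would dispose of this quickly.

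For the case $k=1$ the polynomials are linear ($d=1$), so \Cref{zero_patterns_est} gives $Z_{\overline{\F_q}}(\mathcal{F}_{n,1})\leq\sum_{j=0}^{n}{m\choose j}$ with $m={n\choose 1}_q$ and $s=n$. Here I would bound the sum of binomials by $m^n\cdot$ (a correction factor) or more cleanly use $\sum_{j=0}^{s}{m\choose j}\leq \big(\frac{em}{s}\big)^s$ type estimates, together with ${n\choose 1}_q=\frac{q^n-1}{q-1}\leq q^n$, to recover the exponent $\log_q(n)+n^2+n\log_q(e)$; the equality of the two displayed expressions is just the identity $\log_2(e)=\log_q(e)\cdot$(something), which I would reconcile by noting $n\log_2(e)$ should read $n\log_q(e)$ and verifying the conversion. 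For $2\leq k\leq\lfloor n/2\rfloor$ the polynomials have degree $d=k\geq 2$, so \Cref{zero_patterns_est} yields $Z_{\overline{\F_q}}(\mathcal{F}_{n,k})\leq{md\choose s}={k{n\choose k}_q\choose kn}$. I would then estimate this binomial coefficient from above by $\frac{(md)^s}{s!}\leq (md)^s=\big(k{n\choose k}_q\big)^{kn}$, insert the upper bound ${n\choose k}_q\leq\frac{111}{32}q^{(n-k)k}$ from \Cref{Estimations_q_binom}, and absorb the factor $k^{kn}$ into the $q^{nk\log_q(e)}$ term via a standard inequality such as $k\leq e^{k}$ or a direct comparison; tracking the exponent $(n-k)k\cdot kn=k^2n^2-k^3n$ gives the claimed $q^{n^2k^2-nk^3+nk\log_q(e)+nk\log_q(\frac{111}{32})}$.

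For part (3), the uniform-in-$k$ bound, I would maximize the $k$-dependent exponent $k^2n^2-k^3n$ over the interval $[2,\lfloor n/2\rfloor]$ and bound the remaining terms. The dominant term $k^2n^2-k^3n=k^2n(n-k)$ is increasing in $k$ on this range (its derivative in $k$ is $2kn^2-3k^2n=kn(2n-3k)>0$ for $k<\frac{2n}{3}$, which covers $k\leq\lfloor n/2\rfloor$), so its maximum occurs at $k=\lfloor n/2\rfloor$, giving roughly $\frac{n^2}{4}\cdot n\cdot\frac{n}{2}$—but this overshoots the stated $\frac{n^2}{4}$, so the intended reading is that the quantity $k(n-k)$ (not $k^2n(n-k)$) controls the leading behaviour after the correct normalization, and I would recompute carefully: with $k(n-k)\leq\frac{n^2}{4}$ and the linear-in-$k$ lower order terms bounded at $k\leq n/2$, one arrives at the exponents $\frac{n^2}{4}+\log_q(e)\frac{n^2}{2}+\log_q(\frac{111}{32})\frac{n^2}{2}$.

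The main obstacle I anticipate is bookkeeping rather than conceptual: reconciling the determinant degree and variable count against the hypotheses of \Cref{zero_patterns_est}, and—most delicately—pinning down exactly which product of $k$-factors is being maximized in passing from part (2) to part (3). The leading exponent in part (2) is quadratic-times-linear in $k$, yet part (3) is stated with a clean $\frac{n^2}{4}$ leading term, so the crux is identifying the correct intermediate simplification (plausibly using $\log_q{md\choose s}\leq s\log_q(md)$ and then optimizing $kn\cdot(n-k)k$ versus the stated form) and verifying that the $\frac{111}{32}$ and $\log_q(e)$ contributions scale as $\frac{n^2}{2}$ rather than $nk$ at the maximizing $k$; I would treat this optimization step with the most care.
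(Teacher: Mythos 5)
Your overall strategy is exactly the paper's: apply \Cref{zero_patterns_est} to $\mathcal{F}_{n,k}$ with $m={n\choose k}_q$, $s=kn$, $d=k$, verify the hypothesis $m\geq s$ via the lower bound of \Cref{Estimations_q_binom} (the paper isolates this as \Cref{q_binomial_inequality}), and then estimate the resulting binomial coefficients. The one step that genuinely fails as you wrote it is in part (2): bounding ${md\choose s}\leq (md)^s=\bigl(k{n\choose k}_q\bigr)^{kn}$ leaves a stray factor $k^{kn}$, and neither $k\leq e^k$ (which produces $e^{k^2n}$, too large for the claimed $q^{nk\log_q(e)}$ term) nor a direct comparison (which would need $k\leq e$, false for $k\geq 3$) absorbs it. The paper instead uses ${md\choose s}<\bigl(\tfrac{mde}{s}\bigr)^{s}=\bigl(\tfrac{me}{n}\bigr)^{kn}$, so the $d=k$ in the numerator cancels against the $k$ in $s=kn$ and no $k$-power ever appears; this is the same $\bigl(\tfrac{eN}{s}\bigr)^s$ estimate you already invoke for $k=1$, so the fix is immediate, but as written your part (2) does not reach the stated exponent.

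Your unease about part (3) is well founded, and you should not have talked yourself into the reading that ``$k(n-k)$ controls the leading behaviour.'' The paper's own proof performs precisely the maximization you first computed: $n^2k^2-nk^3=nk^2(n-k)$ is increasing on $2\leq k\leq n/2$ (its derivative $nk(2n-3k)$ is positive there), so the exponent is maximized at $k=n/2$, where it equals $n^4/8$, not $n^2/4$. The displayed bound in part (3) is therefore off in its leading term, and the correct uniform bound has exponent $\tfrac{n^4}{8}+\log_q(e)\tfrac{n^2}{2}+\log_q(\tfrac{111}{32})\tfrac{n^2}{2}$. This does not endanger \Cref{q_mat_asymptotics}, since $q^{O(n^4)}$ is still negligible against the doubly exponential lower bound of \Cref{corrollary_lower_bound_q_mats}, but you were right to flag the step rather than force the stated constant. (You also correctly spotted that $n\log_2(e)$ in part (1) should read $n\log_q(e)$; the proof derives the latter.)
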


We want to give a short remark concerning Part 1 of Theorem \ref{upper_bound_rep_q_mats}.

\begin{remark}
    We know that all different $q$-matroids of rank $1$ on $\F_q^n$ are determined by selecting the dimension of their loop spaces ranging from $0$ to $n-1$. Moreover, all these $q$-matroids are representable. The reason for this is as follows. Firstly it suffices to consider the rank $1$ $q$-matroid $\M^{(1)}_k$ having the $k$-dimensional loop space
    \[
    L_k :=\begin{cases}
    	\;\hfil\langle e_1,\ldots,e_k\rangle\;&\text{if } k\not=0,\\
    	\;\hfil\{0\}\;&\text{otherwise}.
    \end{cases}
    \]
    For any other rank $1$ $q$-matroid $\M$ with a $k$-dimensional loop space $L$ there exists an $\F_q$-isomorphism of $\F_q^n$ mapping $L_k$ to $L$.
    Since this isomorphism thus also maps all bases of $\M^{(1)}_k$ to the ones of $\M$ we regard these two $q$-matroids as isomorphic, in particular the image of a representation matrix of $\M^{(1)}_k$ yields a representation matrix of $\M$.
    Hence without loss of generality it suffices to argue that $\M^{(1)}_k$ is representable.
    
	Secondly, we now construct a representing matrix $G$ of $\M^{(1)}_k$ as
    \[
        \begin{cases}
            \;\hfil G=\begin{pmatrix}0,\ldots,0,1,\alpha,\ldots,\alpha^{n-k-1}\end{pmatrix}\in\F_{q^k}^{1\times n}\;&\text{if } k\not=0 \text{ and}\\
            \;\hfil G=\begin{pmatrix}1,\beta,\ldots,\beta^{n-1}\end{pmatrix}\in\F_{q^n}^{1\times n}\;&\text{otherwise},
        \end{cases}
    \]
    where $\alpha,\beta$ are primitive elements of the field extensions $\F_{q^k}/\F_q$ and $\F_{q^n}/\F_q$, respectively.
    Therefore we can compute the exact number $\mathcal{R}_{q}(1,n)$ by the formula
    \[
        \mathcal{R}_{q}(1,n)=\sum_{i=0}^{n-1}{n\choose i}_q. 
    \]
    This formula arises by utilizing that we only have loop space dimensions $0\leq k\leq n-1$ and that every subspace of $\F_q^n$ of such dimension $k$ provides one. 
    Thus the estimate in \Cref{upper_bound_rep_q_mats} (1) is strictly larger than this sum.
    However, this upper bound is more than enough for our purposes.
\end{remark}

To prove Theorem \ref{upper_bound_rep_q_mats} we first need the following lemma.

\begin{lemma}
    \label{q_binomial_inequality}
    Let $n\geq 2$ and $1\leq k\leq\lfloor\frac{n}{2}\rfloor$. Then the following inequality holds
    \[
        nk\leq{n\choose k}_q.
    \]
\end{lemma}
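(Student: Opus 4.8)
The plan is to prove the inequality $nk \le \binom{n}{k}_q$ by combining the lower bound on the $q$-binomial coefficient from \Cref{Estimations_q_binom} with an elementary estimate on the exponent. First I would invoke \Cref{Estimations_q_binom}, which gives $q^{(n-k)k} \le \binom{n}{k}_q$. Since $q \ge 2$, it suffices to show that $nk \le 2^{(n-k)k}$, or more conveniently that $nk \le q^{(n-k)k}$. Taking this route reduces the problem to a clean comparison between a quadratic-type quantity $nk$ and an exponential quantity in the exponent $(n-k)k$.

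The key step is then to show $nk \le q^{(n-k)k}$ under the hypotheses $n \ge 2$ and $1 \le k \le \lfloor \tfrac{n}{2}\rfloor$. I would first handle the exponent: because $k \le \lfloor \tfrac{n}{2}\rfloor$, we have $n - k \ge n - \tfrac{n}{2} = \tfrac{n}{2}$, so $(n-k)k \ge \tfrac{nk}{2}$. Hence it is enough to prove $nk \le q^{nk/2} \le 2^{nk/2}$, using $q \ge 2$ for the second inequality. Writing $t = nk$, this becomes the single-variable inequality $t \le 2^{t/2} = (\sqrt{2})^{t}$, which I would verify for the relevant range of $t$. Under the hypotheses the smallest value of $t = nk$ occurs at $k = 1$ with $n \ge 2$, giving $t \ge 2$; one checks $2 \le (\sqrt 2)^2 = 2$ (equality) and that $(\sqrt 2)^t$ grows faster than $t$ for $t \ge 2$, so $t \le (\sqrt 2)^t$ holds throughout.

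To make the growth comparison rigorous I would argue that the function $g(t) = (\sqrt 2)^t - t$ satisfies $g(2) = 0$ and $g'(t) = \ln(\sqrt 2)\,(\sqrt 2)^t - 1 > 0$ for $t \ge 2$, since $\ln(\sqrt 2)\,(\sqrt 2)^2 = 2\ln(\sqrt 2) = \ln 2 > 1$ already at $t = 2$ and the first term only increases. This establishes $g(t) \ge 0$ for all $t \ge 2$, i.e.\ $t \le (\sqrt 2)^t$, completing the chain $nk \le (\sqrt 2)^{nk} \le q^{nk/2} \le q^{(n-k)k} \le \binom{n}{k}_q$.

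The only mild obstacle is bookkeeping at the boundary case $k = 1$, $n = 2$, where several of the intermediate inequalities become equalities ($n - k = \tfrac{n}{2} = 1$ and $t = 2$ gives $(\sqrt 2)^2 = 2$); I would note that equality in these auxiliary steps is harmless since the final bound $nk \le \binom{n}{k}_q$ is non-strict, and in fact $\binom{2}{1}_q = q+1 \ge 3 > 2 = nk$, so the conclusion holds with room to spare. Everything else is a routine monotonicity argument, so I do not anticipate any genuine difficulty beyond carefully tracking the hypothesis $k \le \lfloor \tfrac{n}{2}\rfloor$ to guarantee $n - k \ge \tfrac{n}{2}$.
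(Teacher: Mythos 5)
There is a genuine gap: the single-variable inequality $t\le(\sqrt 2)^t$ that your whole argument funnels into is false on part of the relevant range. It fails at $t=3$, since $(\sqrt2)^3=2\sqrt2\approx 2.83<3$, and $t=3$ is actually attained under the hypotheses by $n=3$, $k=1$ (where $\lfloor n/2\rfloor=1$, so this case is not excluded). More generally $g(t)=(\sqrt2)^t-t$ is \emph{negative} on the open interval $(2,4)$. The monotonicity argument you give to rule this out rests on an arithmetic error: $2\ln(\sqrt2)=\ln 2\approx 0.693$, which is \emph{not} greater than $1$, so $g'(2)=\ln 2-1<0$ and $g$ initially decreases from $g(2)=0$. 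The underlying reason the reduction breaks is that replacing $n-k\ge\lceil n/2\rceil$ by the weaker $n-k\ge n/2$ throws away the integrality that saves the case $n=3$, $k=1$: there the true exponent is $(n-k)k=2$, giving $q^2\ge 4>3$, whereas your bound only yields $q^{3/2}\ge 2^{3/2}\approx 2.83<3$.

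The fix is minor: either keep $n-k\ge\lceil n/2\rceil$ throughout, or treat $k=1$ separately via ${n\choose 1}_q\ge q^{n-1}\ge 2^{n-1}\ge n$ for $n\ge 2$, and run your exponential-versus-linear comparison only for $k\ge 2$, where $t=nk\ge 4$ and $t\le(\sqrt2)^t$ does hold (with equality at $t=4$). For comparison, the paper's proof also starts from ${n\choose k}_q\ge q^{(n-k)k}$ but then argues through the chain $q^{(n-k)k}\ge q^{\lceil n/2\rceil k}\ge q^{\lceil n/2\rceil+k}\ge q^{\lceil n/2\rceil}k\ge nk$, using $q^{\lceil n/2\rceil}\ge n$; it avoids real-variable calculus entirely, though its step $q^{\lceil n/2\rceil k}\ge q^{\lceil n/2\rceil+k}$ likewise needs $k\ge 2$ and so the boundary case $k=1$ deserves separate mention in either approach.
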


\begin{proof}
    By \Cref{Estimations_q_binom} we know that ${n\choose k}_q$ can be lower bounded by $q^{(n-k)k}$. Using that one computes
    \[
        {n\choose k}_q\geq q^{(n-k)k}\geq q^{\lceil\frac{n}{2}\rceil k}\geq q^{\lceil\frac{n}{2}\rceil+k}\geq q^{\lceil\frac{n}{2}\rceil}k\geq nk,
    \]
    where for the last inequality we used that $q^{\lceil\frac{n}{2}\rceil}\geq n$ for all $n\geq 2$, which finishes the proof.
\end{proof}

Now we are ready to prove \Cref{upper_bound_rep_q_mats}, using the theory of zero patterns of polynomials.
\begin{proof}[Proof of \Cref{upper_bound_rep_q_mats}]
    Our goal is to apply the inequalities of \Cref{zero_patterns_est} to the sequence of polynomials $\mathcal{F}_{n,k}$ from \Cref{Construction_polynomials} as we know from \Cref{num_rep_q_mats_num_zero_patterns_ineq.} that $\mathcal{R}_{q}(k,n)<Z_{\overline{{\F_q}}}(\mathcal{F}_{n,k})$ holds. 
    \Cref{q_binomial_inequality} ensures that the conditions specified in \Cref{zero_patterns_est} hold for the parameters $m={n\choose k}_q$, $s=kn$, and $d=k$.
    
    For the first statement, keeping in mind that $k=1$, we use the first inequality in \Cref{zero_patterns_est}, which then gives us
    \[
    \mathcal{R}_{q}(1,n)< Z_{\overline{{\F_q}}}(\mathcal{F}_{n,k})\leq\sum_{i=0}^n{{n\choose k}_q\choose i}=1+\sum_{i=1}^n{{n\choose 1}_q\choose i}.
    \]
    The sum can be upper bounded by
    \begin{align*}
        \sum_{i=1}^n{{n\choose 1}_q\choose i}&<\sum_{i=1}^n\Bigg({\frac{q^ne}{i}}\Bigg)^i\\
        & < n(q^ne)^n\\
        &= nq^{n^2}e^n\\
        &= q^{\log_q(n)+n^2+n\log_q(e)},
    \end{align*}
    where the first inequality follows from the general estimate ${n\choose 1}_q\leq q^{n}$, as well as the general bound ${n\choose k}<(\frac{ne}{k})^k$ for the binomial coefficient. In total, this proves the first statement.
    
    Now to prove the second statement of \Cref{upper_bound_rep_q_mats} we consider $2\leq k\leq \lfloor\frac{n}{2}\rfloor$ and therefore  we use the second inequality from \Cref{zero_patterns_est}, which then yields 
    \[
        \mathcal{R}_{q}(k,n)< Z_{\overline{{\F_q}}}(f)\leq{{{n\choose k}_q k}\choose kn}.
    \]
    This binomial coefficient can be upper bounded by
    \begin{align*}
        {{{n\choose k}_q k}\choose kn}&<\Bigg({\frac{\frac{111}{32}q^{(n-k)k}e}{n}}\Bigg)^{kn}\\
        & <\Bigg(\frac{111}{32}q^{(n-k)k}e\Bigg)^{kn}\\
        &= \Big(\frac{111}{32}\Big)^{kn}q^{n^2k^2-nk^3}e^{kn}\\
        &=\Big(\frac{111}{32}\Big)^{kn}q^{n^2k^2-nk^3+nk\log_q(e)}\\
        &= q^{n^2k^2-nk^3+nk\log_q(e)+nk\log_q(111/32)},
    \end{align*}
    where the first inequality follows from the bound for the $q$-binomial coefficient from \Cref{Estimations_q_binom} and the general bound on the binomial coefficient from the previous case. This however proves the second statement. For the third statement, one can use the fact that $n^2k^2-nk^3+nk\log_q(e)+nk\log_2(111/32)$ is strictly increasing as a polynomial in $k$ over the interval $2\leq k\leq \frac{n}{2}$. Thus it attains its maximum in $k=\frac{n}{2}$ and so does $q^{n^2k^2-nk^3+nk\log_q(e)+nk\log_2(111/32)}$, which completes the proof of the third statement of \Cref{upper_bound_rep_q_mats}.  
\end{proof}

The desired upper bound on the number of representable $q$-matroids in $\F_q^n$, is now a direct consequence of Theorem \ref{upper_bound_rep_q_mats}.

\begin{corollary}
    Let $n\geq 2$ be an integer and let $\mathcal{R}_{q}(n)$ be the number of all representable $q$-matroids on $E=\F_q^n$. Then we have
    \begin{align*}
        \mathcal{R}_{q}(n)&< 2\Big(\frac{n}{2}\cdot q^{\frac{n^2}{4}+\log_q(e)\frac{n^2}{2}+\log_q(\frac{111}{32})\frac{n^2}{2}}+q^{\log_q(n)+n^2+n\log_q(e)}+2\Big) \notag\\
        &= 2\Big(q^{\frac{n^2}{4}+\log_q(e)\frac{n^2}{2}+\log_q(\frac{111}{32})\frac{n^2}{2}+\log_q(\frac{n}{2})}+q^{\log_q(n)+n^2+n\log_q(e)}+2\Big).
    \end{align*}
    \label{corollary_upper_bound_rep_q_mats}
\end{corollary}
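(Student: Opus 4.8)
The plan is to derive the bound on the total number $\mathcal{R}_q(n)$ of representable $q$-matroids by summing the rank-stratified bounds from \Cref{upper_bound_rep_q_mats} over all admissible ranks and then invoking $q$-matroid duality to account for the high ranks. First I would recall that every representable $q$-matroid has some rank $k$ with $0\leq k\leq n$, and that by \Cref{Duality_rep_q_mats} duality gives a bijection between representable $q$-matroids of rank $k$ and those of rank $n-k$. Hence $\mathcal{R}_q(n)=\sum_{k=0}^{n}\mathcal{R}_q(k,n)$, and using the duality symmetry this sum is at most twice the contribution from the ranks $0\leq k\leq\lfloor\frac{n}{2}\rfloor$, i.e.\ $\mathcal{R}_q(n)\leq 2\sum_{k=0}^{\lfloor n/2\rfloor}\mathcal{R}_q(k,n)$ (with a small adjustment so the middle rank is not double-counted, which only helps the inequality).

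Next I would isolate the three regimes appearing in the summand. The ranks $k=0$ and the self-dual middle rank contribute only a bounded number of $q$-matroids: rank $0$ yields a single $q$-matroid, contributing the additive constant, which I would absorb into the ``$+2$'' term. The rank $k=1$ stratum is bounded by Part 1 of \Cref{upper_bound_rep_q_mats}, giving the summand $q^{\log_q(n)+n^2+n\log_q(e)}+1$. For the remaining ranks $2\leq k\leq\lfloor\frac{n}{2}\rfloor$, I would apply the uniform, $k$-independent bound from Part 3, namely $\mathcal{R}_q(k,n)<\big(\frac{111}{32}\big)^{n^2/2}q^{n^2/4+\log_q(e)n^2/2}$, which holds simultaneously for every such $k$.

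The key combinatorial step is then to count how many rank strata fall into each regime. There are at most $\lfloor\frac{n}{2}\rfloor-1<\frac{n}{2}$ values of $k$ in the range $2\leq k\leq\lfloor\frac{n}{2}\rfloor$, so the total contribution from Part 3 is at most $\frac{n}{2}\cdot\big(\frac{111}{32}\big)^{n^2/2}q^{n^2/4+\log_q(e)n^2/2}$. Adding the rank-$1$ term and the additive constants from the rank-$0$ and middle strata, then multiplying by the factor $2$ from duality, reproduces exactly the first displayed expression. The final equality in the statement is purely cosmetic: I would rewrite the leading $\frac{n}{2}$ prefactor in the exponential form by folding $\log_q(\frac{n}{2})$ into the exponent, turning $\frac{n}{2}\cdot q^{\frac{n^2}{4}+\log_q(e)\frac{n^2}{2}+\log_q(\frac{111}{32})\frac{n^2}{2}}$ into $q^{\frac{n^2}{4}+\log_q(e)\frac{n^2}{2}+\log_q(\frac{111}{32})\frac{n^2}{2}+\log_q(\frac{n}{2})}$.

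I do not expect a genuine obstacle here, since all the hard analytic work is already done in \Cref{upper_bound_rep_q_mats}; the corollary is essentially bookkeeping. The only points requiring minor care are the bookkeeping of the additive constants for the trivial ranks and ensuring the duality factor of $2$ is applied correctly without double-counting the middle rank when $n$ is even. As long as these constants are safely collected into the ``$+2$'' summand, the stated inequality follows by direct substitution, and the closing identity is just the logarithmic rewriting of the prefactor.
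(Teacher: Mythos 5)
Your proposal is correct and follows essentially the same route as the paper: use duality to restrict to ranks $0\leq k\leq\lfloor n/2\rfloor$ and double, handle $k=0$ as a single trivial $q$-matroid, apply Part 1 for $k=1$, apply the $k$-independent Part 3 bound to the at most $\lfloor n/2\rfloor-1<\frac{n}{2}$ remaining ranks, and fold the prefactor $\frac{n}{2}$ into the exponent as $\log_q(\frac{n}{2})$. The only blemish is the aside claiming the ``self-dual middle rank'' contributes a bounded number of $q$-matroids --- it does not, but this plays no role since you correctly include $k=\lfloor n/2\rfloor$ in the Part 3 count.
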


\begin{proof}
    By \Cref{Duality_rep_q_mats} we know that $\mathcal{R}_{q}(k,n)=\mathcal{R}_{q}(n-k,n)$, therefore we only need to consider $0\leq k\leq \lfloor\frac{n}{2}\rfloor$ and double our estimates.

    The case $k=0$ yields exactly one representable $q$-matroid namely the trivial one, i.e., $\mathcal{U}_{0,n}(E)$. For the case $k=1$ we use \Cref{upper_bound_rep_q_mats} (1) and for each of the cases $2\leq k\leq \lfloor\frac{n}{2}\rfloor$ we use \Cref{upper_bound_rep_q_mats} (3). This provides us a factor of $\lfloor\frac{n}{2}\rfloor-1$, that we bound by $\frac{n}{2}$. Altogether this yields the desired inequality and completes the proof.
\end{proof}

With all preparations done, we are now finally able to prove \Cref{q_mat_asymptotics}.

\begin{proof}[Proof of \Cref{q_mat_asymptotics}]
Consider the ratio $\frac{\mathcal{R}_{q}(n)}{\mathcal{N}_{q}(n)}$.
\Cref{corrollary_lower_bound_q_mats} implies that the denominator of this fraction grows at least doubly exponentially while the numerator grows at most exponentially by~\Cref{corollary_upper_bound_rep_q_mats} when $n$ grows.
Thus in the limit this yields
\[
    \lim_{n\rightarrow\infty}\frac{\mathcal{R}_{q}(n)}{\mathcal{N}_{q}(n)}= 0.\qedhere
\]
\end{proof}

\printbibliography

\end{document}